\newtheorem{theorem}{Theorem}[section]
\newtheorem*{theorem*}{Theorem}
\newtheorem{proposition}[theorem]{Proposition}
\newtheorem{lemma}[theorem]{Lemma}
\theoremstyle{definition}    
\newtheorem{definition}[theorem]{Definition}
\newtheorem{example}[theorem]{Example}
\newcommand{\pair}[2]{\langle #1, #2 \rangle}
\newcommand{\ignore}[1]{}
\newcommand{\ol}[1]{\overline{#1}}
\newcommand{\ti}[1]{\widetilde{#1}}
\newcommand{\wh}[1]{\widehat{#1}}
\newcommand{\tn}[1]{\textnormal{#1}}
\renewcommand{\i}{{\mathrm{i}}}
\newcommand{\Zv}{Z_{\!\bm{\nu}}}
\def\Ad{\ensuremath{\textnormal{Ad}}}
\def\g{\ensuremath{\mathfrak{g}}}
\def\R{\ensuremath{\mathcal{R}}}
\def\S{\ensuremath{\mathfrak{S}}}
\def\f{\ensuremath{\mathfrak{f}}}
\def\Cl{\ensuremath{\textnormal{Cl}}}
\def\Cliff{\ensuremath{\textnormal{Cliff}}}
\def\bC{\ensuremath{\mathbb{C}}}
\def\bZ{\ensuremath{\mathbb{Z}}}
\def\End{\ensuremath{\textnormal{End}}}
\def\id{\ensuremath{\textnormal{id}}}
\def\pt{\ensuremath{\textnormal{pt}}}
\def\index{\ensuremath{\textnormal{Index}}}
\def\supp{\ensuremath{\textnormal{supp}}}
\def\KK{\ensuremath{\textnormal{KK}}}
\def\K{\ensuremath{\textnormal{K}}}
\begin{document}
\newgeometry{bottom=2.5cm, top=2cm, hmargin=2.5cm}
\sloppy
\title{A KK-theoretic perspective on quantization commutes with reduction}

\author{Rudy Rodsphon}
\date{July 31, 2025}
\address{Sichuan University - School of Mathematics \\ 
No. 24 South Section 1, First Loop Road \\ Chengdu, P. R. China, 610065}
\email{rudy.rodsphon@scu.edu.cn ; rrudy.math@icloud.com}

\maketitle

\vspace{-0.5cm}
\setlength{\parindent}{0mm}

%\begin{abstract}
%Nous revisitons les travaux de Paradan et Vergne sur le probl\`{e}me de la quantification commutant \`{a} la r\'{e}duction sous l'angle de la KK-th\'{e}orie et d'un formalisme r\'{e}cent introduits par Kasparov, nous concentrant plus particuli\`{e}ment sur les parties concernant la th\'{e}orie de l'indice, notamment leur "formule de localisation non-abelienne \`{a} la Witten". Bien que faisant intervenir des ingr\'{e}dients similaires, nos m\'{e}thodes font aussi appara\^{i}tre des simplifications conceptuelles int\'{e}ressantes, et mettent clairement en \'{e}vidence un lien avec les %approches de Ma--Tian--Zhang.   \end{abstract}

\selectlanguage{english}

\begin{abstract}
We reframe Paradan--Vergne's approach to quantization commutes with reduction in KK-theory through a recent formalism introduced by Kasparov, focusing more especially the index theoretic parts that lead to their "Witten non-abelian localization formula". While our method uses the same ingredients as their's in spirit, interesting conceptual simplifications occur, and the relationship to the Ma--Tian--Zhang analytic approach becomes quite transparent. 
\end{abstract}

\vspace{5mm}

\section*{Introduction} \setcounter{section}{0}
The quantization-commutes-with-reduction problem introduced by Guillemin--Sternberg \cite{GS82} has generated a great deal of interest since its inception, particularly because of the wide range of mathematics that have been involved in its study. Loosely speaking, the quantization-commutes-with-reduction principle establishes relationships between equivariant cohomology and the cohomology of `reduced spaces', which involve quotients and a moment map. The original problem solved by Guillemin and Sternberg concerned the action of compact Lie groups on compact K\"ahler manifolds. Meinrenken \cite{Mein98} later generalized this result to compact symplectic manifolds equipped with a Hamiltonian action of a compact Lie group, using symplectic surgery techniques. More recent approaches use localization techniques that are deeply rooted in index theory, and have the advantage to be applicable in wider situations, e.g involving non-compact symplectic manifolds (also referred to as the Vergne conjecture). They mostly split into two schools of thought:

\begin{enumerate}[leftmargin=8mm]
\item[$\bullet$] Analytic techniques (inspired by Witten's deformation of the de Rham complex) introduced in this context by Tian--Zhang  \cite{TZ98}, subsequently extended by Ma--Zhang \cite{MZ14} in the optic of solving the Vergne conjecture, and systematized by Braverman \cite{Brav02}.

\item[$\bullet$] K-theoretic and transverse index techniques (in the sense of Atiyah \cite{AtiTransEll}) introduced in this context by Paradan \cite{Par01}, subsequently extended by Paradan--Vergne \cite{PV14, PV14Wit} also in view of tackling the Vergne conjecture, in the slightly more general setting of spin$^c$ manifolds. 
\end{enumerate}

Note also the work of Hochs--Song \cite{HS17}, which consists in implementing the analytic approach within the aforementioned framework of Paradan and Vergne. \\

The present article proposes a different viewpoint on the index theoretic constructions and results used by Paradan and Vergne \cite{PV14Wit} within a recent formalism developed by Kasparov \cite{Kas16}. The latter reframes and generalizes extensively Atiyah's work on transversally elliptic operator in view of deriving a powerful index theorem, which relates Atiyah's transverse analytic index to a topological index. To be more precise, we prove that Paradan--Vergne's non-abelian localization theorem, which is a key step in their solution to the quantization commutes with reduction problem, is a direct consequence of Kasparov's index theorem together with natural functorial arguments. The context is as follows. \\

Let $(M,g)$ be a (complete Riemannian) manifold without boundary equipped with the action of a compact Lie group $G$ with Lie algebra $\g$, let $E$ be a $G$-equivariant Clifford module bundle over $M$, let $\!\nu : M \to \g^{*}$ be a moment map (in a weak sense, see Section \ref{sec:moment map}), and assume that $0$ is a regular value of $\nu$. The zero set of the vector field $\!\bm{\nu}$ associated to $\!\nu$ admits a decomposition
\[ \Zv = \bigsqcup_{\beta \in \mathcal{B}} G \cdot \big(M^{\beta} \cap \nu^{-1}(\beta) \big) \]
where $\mathcal{B} \subset \g^*$ is a finite set consisting of representatives of the coadjoint orbits within $\!\nu(Z_{\!\bm{\nu}})$.  The zero set $\Zv$ can be singular, but the idea to overcome this issue is to work, for each of its components in the above decomposition,  in appropriate open neighborhoods that are diffeomorphic to "slices". With this in mind, Paradan--Vergne's theorem calculates the transverse index of the Dirac operator by localization around the zero set $\Zv$, and can be stated as follows:

\begin{theorem*} \label{thm:main}
The transverse index of the symbol $\sigma_{\!\nu}(x,\xi) = \i c(\xi + \!\bm{\nu}) \in C^{\infty}(T^*M, \pi_{T^*M}^*\End(E))$
is given by the fixed point formula:
\[ \mathrm{Index}_G [\sigma_{\!\nu}] = \sum_{(V_\pi,\pi) \in \widehat{G}} \big([E_0 \otimes (\nu^{-1}(0) \times_G V_\pi^*)] \otimes_{C(M_0)}[D_0] \big) \,  V_\pi + 
\underset{\beta \in \mathcal{B}}{\sum_{\beta \neq 0}}  \,\mathrm{Index}_{G_\beta} \dfrac{[ \Lambda^{\bullet}_\bC (\mathfrak{g}/\g_\beta)] \boxtimes [\left.\sigma_{\!\nu}\right|_{Y_\beta}]}{\sum_k (-1)^k [ \Lambda^{k} N_\beta ]} \in \widehat{R}(G) \]
where $E_0$ is the Clifford module bundle over the reduced space $M_0=\!\nu^{-1}(0)/G$ induced by $E$ ; $[E_0 \otimes (\nu^{-1}(0) \times_G V_\pi^*)]$ is the K-theory class of the vector bundle $E_0 \otimes (\nu^{-1}(0) \times_G V_\pi) \to M_0$ ; $G_\beta \subset G$ denotes the stabilizer subgroup of $\beta \in \mathcal{B}$ relative to the coadjoint action of $G$ on $\mathfrak{g}^*$ ; $\g_\beta = \mathrm{Lie}(G_\beta)$ ; $Y_\beta$ is a small open neighborhood of $M^\beta \cap \nu^{-1}(\beta)$ inside the fixed point set $M^\beta$ ; $[\sigma_{\!\nu}] \in K^0_G(T_G^*M)$ and $[\left.\sigma_{\!\nu}\right|_{Y_\beta}] \in K^0_{G_\beta}(T_{G_\beta}Y_\beta)$ denote respectively the symbol classes of $\sigma_{\!\nu}$ and of its restriction to $T^*Y_\beta$; and $N_\beta$ is the normal bundle of $M^{\beta}$ in $M$.
\end{theorem*}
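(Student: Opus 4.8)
\emph{Strategy and localization.} The plan is to combine Kasparov's index theorem --- which identifies the transverse analytic index with a computable topological index --- with its excision property, multiplicativity under external Kasparov products, and naturality under induction from closed subgroups, together with a few elementary $G$-equivariant homotopies of the symbol $\sigma_{\!\nu}$ near the components of $\Zv$. Since $\sigma_{\!\nu}(x,\xi)=\i c(\xi+\!\bm{\nu})$ is transversally elliptic with characteristic set $\Zv$, one has $[\sigma_{\!\nu}]\in K^0_G(T^*_GM)$, and by excision $\mathrm{Index}_G[\sigma_{\!\nu}]$ depends only on an arbitrarily small $G$-invariant neighborhood of $\Zv$. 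Choosing pairwise disjoint $G$-invariant open sets $U_\beta\supset G\cdot(M^\beta\cap\nu^{-1}(\beta))$ adapted to the decomposition of $\Zv$ gives
\[ \mathrm{Index}_G[\sigma_{\!\nu}]=\sum_{\beta\in\mathcal{B}}\mathrm{Index}_G[\sigma_{\!\nu}|_{U_\beta}],\qquad [\sigma_{\!\nu}|_{U_\beta}]\in K^0_G(T^*_GU_\beta), \]
so it remains to identify each summand.

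\emph{The summands with $\beta\neq0$.} Fix $\beta\neq0$. By the slice theorem $U_\beta$ is $G$-equivariantly diffeomorphic to $G\times_{G_\beta}S_\beta$ for a $G_\beta$-invariant slice $S_\beta$, which one arranges to be a tubular neighborhood of $M^\beta\cap\nu^{-1}(\beta)$ fibering over $Y_\beta\subset M^\beta$ with fiber the normal bundle $N_\beta$. Kasparov's transverse index, like Atiyah's, is natural under induction: the $G$-index of a class pushed forward from $G\times_{G_\beta}(-)$ is the induction to $G$ of the corresponding $G_\beta$-index, and the change of transverse cotangent bundle $T^*_GU_\beta\rightsquigarrow T^*_{G_\beta}S_\beta$ --- the directions of $G$-orbits transverse to $G_\beta$-orbits --- contributes the Koszul factor $[\Lambda^\bullet_\bC(\g/\g_\beta)]$. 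On $S_\beta$, writing $\!\bm{\nu}$ up to $G_\beta$-homotopy as $\!\bm{\nu}|_{Y_\beta}$ plus a radial term along $N_\beta$ exhibits $[\sigma_{\!\nu}|_{S_\beta}]$ as the external product $[\sigma_{\!\nu}|_{Y_\beta}]\boxtimes[\mathrm{Bott}(N_\beta)]$; multiplicativity together with the classical Atiyah--Segal--Singer computation of the $G_\beta$-index of the Bott element on $N_\beta$, whose index-theoretic effect is division by the K-theoretic Euler class $\sum_k(-1)^k[\Lambda^kN_\beta]$, produces exactly the $\beta$-summand.

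\emph{The summand $\beta=0$.} Here $G_0=G$, $\g_0=\g$, $M^0=M$ and $N_0=0$, so the correction factors above are trivial and one computes $\mathrm{Index}_G[\sigma_{\!\nu}|_{U_0}]$ for a $G$-invariant neighborhood $U_0$ of $\nu^{-1}(0)$. As $0$ is a regular value, $\nu^{-1}(0)$ is a smooth $G$-manifold on which $G$ acts with finite stabilizers, and $\pi\colon\nu^{-1}(0)\to M_0$ is an orbifold principal bundle; a normal-form homotopy near $\nu^{-1}(0)$ splits $\sigma_{\!\nu}$ into the Clifford symbol along $\nu^{-1}(0)$, descending to the Dirac operator $D_0$ on $M_0$ attached to $E_0$, and a Koszul element in the $\g^*$-direction generated by $\!\bm{\nu}$. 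Descending the $G$-equivariant index along this proper locally free action --- the Green--Julg / Morita reduction of $G$-equivariant index theory to index theory on $M_0$ twisted by associated (orbifold) bundles --- then produces the $\widehat G$-decomposition $\sum_{(V_\pi,\pi)\in\widehat G}\big([E_0\otimes(\nu^{-1}(0)\times_GV_\pi)]\otimes_{C(M_0)}[D_0]\big)\,V_\pi$, the Kasparov product $\otimes_{C(M_0)}$ being the index pairing on $M_0$.

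\emph{Main obstacle.} The crux is the case $\beta\neq0$: realizing within Kasparov's KK-theoretic framework the naturality of the transverse index under induction, and tracking the three Thom-type isomorphisms involved --- along $\g/\g_\beta$, along $N_\beta$, and for the change of transverse cotangent bundle --- so that they assemble, via multiplicativity and functoriality, into precisely the Paradan--Vergne quotient with numerator $[\Lambda^\bullet_\bC(\g/\g_\beta)]\boxtimes[\sigma_{\!\nu}|_{Y_\beta}]$ and denominator $\sum_k(-1)^k[\Lambda^kN_\beta]$ (the latter being invertible because the torus generated by $\beta$ acts on $N_\beta$ with no nonzero fixed vectors). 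A secondary point, the orbifold nature of $M_0$ when $G$ acts only locally freely on $\nu^{-1}(0)$, is absorbed by the same index theorem for proper actions. Granting these, summing the localization contributions yields the asserted fixed-point formula.
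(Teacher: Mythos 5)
Your overall strategy is the same as the paper's: localize by excision to slices $U_\beta$ around the components of $\Zv$, handle $\beta=0$ by Morita/Green--Julg descent to the reduced space $M_0$, and handle $\beta\neq 0$ by induction from $G_\beta$ together with Thom-type isomorphisms down to $Y_\beta \subset M^\beta$. Your $\beta=0$ argument matches the paper (Morita equivalence of $G\ltimes C_0(U_0)$ with $C_0(U_0/G)$ plus the Thom isomorphism for the bundle $U_0/G \to M_0$).

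For $\beta\neq 0$, though, there is a real imprecision that you yourself flag in the ``Main obstacle'' paragraph but do not resolve. You assert that $[\sigma_{\!\nu}|_{S_\beta}]$ factors up to $G_\beta$-homotopy as $[\sigma_{\!\nu}|_{Y_\beta}]\boxtimes[\mathrm{Bott}(N_\beta)]$ and that the ``index-theoretic effect'' of the Bott element is division by the Euler class. These two steps, as stated, do not combine correctly: if such a factorization by a Thom/Bott class held, then multiplicativity would give $\mathrm{Index}([\sigma_{\!\nu}|_{S_\beta}]) = \mathrm{Index}([\sigma_{\!\nu}|_{Y_\beta}])$ with no Euler class at all, since the Thom isomorphism preserves the index. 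The Euler factor does not come from ``the index of the Bott element''; it comes from expressing the \emph{inverse} Thom isomorphism via the localization identity $[\mathcal{T}_\beta]\otimes \iota^*_{Y_\beta,V_\beta} = \sum_k(-1)^k[\Lambda^k\eta_\beta]$. The paper handles this by introducing the explicit KK-theoretic Thom element $[\mathcal{T}_\beta]\in\KK^{G_\beta}(\Cl_{\Gamma_\beta}(TY_\beta),\Cl_{\Gamma_\beta}(TV_\beta))$ (its Lemma on the KK-equivalence $\Cl_{\Gamma_\beta}(TY_\beta)\sim\Cl_{\Gamma_\beta}(TV_\beta)$), inserting $[\mathcal{T}_\beta]^{-1}\otimes[\mathcal{T}_\beta]$, absorbing $[\mathcal{T}_\beta]$ into the transverse Dolbeault element via functoriality, and only then applying the localization identity above. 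This is precisely what allows the paper to avoid invoking Atiyah's nontrivial transverse index computation for the Bott element on $\bC^n$ (which Paradan--Vergne use as an intermediary step). Your appeal to ``the classical Atiyah--Segal--Singer computation of the $G_\beta$-index of the Bott element'' slides back toward the Paradan--Vergne route rather than the paper's; and you also do not account for the fact that the relevant symbol on the normal directions is built from $\mathcal{L}_\beta = |\mathcal{L}_\beta|J_\beta$, not from a naive radial Clifford variable, which is why the complex structures $J_\beta$ on $N_\beta$ and $\mathfrak{J}_\beta$ on $\g/\g_\beta$ must be introduced before the Thom classes are well defined. So the strategy is right and matches the paper's in outline, but the decisive step --- assembling the Thom isomorphisms and extracting the Euler class denominator without a separate Bott-index computation --- is where you defer, and it is exactly what the paper's KK-theoretic argument supplies.
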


In the above, $K^0(T_{G}^{*} M)$ is a receptacle for symbol classes of transversally elliptic operators introduced by Atiyah, which is defined as the K-theory of the transverse (co)tangent bundle $T_G^*M$. In the paper, it will subsequently be replaced by  Kasparov's $K$-group $K_0(\Cl_\Gamma(TM))$, which offers the possibility to define an intrinsic topological tranvserse index. \\

The present work exclusively focuses on the index theoretic parts of Paradan--Vergne's work, and as such, we use geometric arguments that are due to them, e.g the decomposition of the zero set $\!\nu(Z_{\!\bm{\nu}})$, or the construction of certain complex structures of the normal bundles of fixed point sets. As for the index theoretic part, what we do is not far remote from Paradan--Vergne's in spirit. Nevertheless, it should be noted that our work is not a mere translation of theirs in KK-theory. Instead, Kasparov's formalism enables quite substantial simplifications (perhaps at the cost of using a more sophisticated machinery), which are mostly due to the existence of the KK-product and of a topological (transverse) index. As a consequence, the process ends up being orthogonal to Paradan--Vergne's approach, and provides a direct derivation of the aforementioned formula from a simple systematic use of appropriate Thom isomorphisms. In Paradan--Vergne's work, the unavailability of these powerful functorial tools hampers the construction of these natural Thom isomorphisms, that they perform  via a rather cumbersome procedure originating from Atiyah--Singer's handling of families of operators. \\

In the course of constructing these Thom isomorphisms, it is also worth mentioning that we avoid the use of a quite non-trivial transverse index calculation of Atiyah for toral actions on $\bC^n$. Rather, the latter becomes a direct consequence of the above formula, whereas it is utilized as a crucial intermediary tool in Paradan--Vergne's approach. \\

Because of the flexibility and generality of Kasparov's framework, we anticipate our methods to cover a wider range of quantization-commutes-with-reduction-type problems, for instance, a case involving a class of Riemannian foliations arising from Lie algebroids in \cite{LLSS24}, which may be treated elsewhere. \\

Lastly, let us note that gathering the material of the present article, our previous work \cite{LRS19} and the work of Hochs--Song, one sees that the analytic and topological approaches to the quantization commutes with reduction are the same up to Poincar\'{e} duality (in the sense of KK-theory). In sum, the present paper advocates KK-theory as an ideal framework to synthesize and unify the different array of techniques developed in view of solving the quantization commutes with reduction problem. 

\vspace{2mm}
 
\subsection*{Plan of the article} Section 1 gives a short account of Kasparov's work \cite{Kas16} on transverse index theory. Section 2 begins with an exposition of Paradan--Vergne's framework \cite{PV14Wit} and ends on a KK-theoretical proof of their non-abelian localization formula.

\subsection*{Acknowledgement}  I want to warmly thank Gennadi Kasparov for many stimulating discussions over the years. The present paper would probably have remained a manuscript note without his impulse.

\subsection*{Funding} This work has received the support of the NSF grants DMS-1952551, DMS-1952557, of the Fundamental Research Funds for the Central Universities provided through Sichuan University and of the Tianfu Emei talent funding provided by the province of Sichuan.   

\subsection*{Framework}
We consider throughout the paper a complete Riemannian manifold $(M, g)$ (without boundary) equipped with an (isometric) action of a compact Lie group $G$ (the results of Section 1 also hold if $G$ is  locally compact and acts properly, isometrically). Let $\tau=TM$ and $\g=\mathrm{Lie}(G)$ denote respectively the tangent bundle of $M$ and the Lie algebra of $G$. We will not make much distinctions between $TM$ and the cotangent bundle $T^*M$, and will most of the time identify them tacitly via the metric. Similarly, $\g$ and its dual $\g^*$ are most of the time identified via a $G$-invariant product on $\g$. If $A, B$ are $C^*$-algebras, $\KK_{\bullet}^G(A,B)$, $K_{\bullet}^G(A)$ and $K^{\bullet}_G(B)$ the associated $G$-equivariant KK-theory, K-theory, K-homology groups. If moreover $A, B$ are $C_0(X)$-algberas,  $\R\KK_{\bullet}^G(X; A,B)$ denotes the representable KK-theory group of $A$ and $B$. 

\vspace{2mm}

\section{Kasparov's approach of transverse index theory} \setcounter{section}{1} \label{Section1}

In this section, we review Kasparov's KK-theorerical approach of transverse index theory for Lie group actions \cite{Kas16} and the associated index theorem. 

\subsection{Orbital Clifford algebra}

Let $(M, g)$  be a complete Riemannian manifold (without boundary) equipped with an (isometric) action of a compact Lie group $G$. For every $x \in M$, the derivative of the action of $G$ on $x$ reads:   
\[ \rho_x: v \in \g \longmapsto \left. \dfrac{d}{d\! t}\right\vert_{t=0} \exp(tv) \cdot x \, \in T_x M \]
and yields in an obvious way a smooth bundle map $\rho: \g_M:=M \times \g \to TM$. There is a natural extension of the $G$-action on $M$ to $\g_M$ given by $g \cdot (x,v) = ( g \cdot x, \Ad(g)v )$, for which $\rho$ is $G$-equivariant. This action being proper, one can equip $\g_M$ with a $G$-invariant Riemannian metric on the bundle $\g_M$.
%$(\, \bm{.} \, , \, \bm{.} \,)_{\g_M}$ 

\begin{definition}
The \emph{orbital tangent field} $\Gamma \subset TM$ is the continuous field of subspaces defined as the image $\Gamma = \mathrm{Im}(\rho)$ of $\rho$. 
\end{definition}

Recall that $\Gamma$ being a \emph{continuous field} means that it admits a set of sections $\{x \mapsto v_x \in \Gamma_x \}$ that spans $\Gamma$ fiberwise, with the requirement that $\Vert v_x \Vert$ be continuous in $x$. If we denote $C_c^{\infty}(M, \g_M)$ the set of smooth sections of the trivial bundle $\g_M = \g \times M$, we define the set of smooth compactly supported sections of $\Gamma$ as the set $\rho(C_c^{\infty}(M, \g_M))$, which makes $\Gamma$ into a continuous field.  As sets, the fibers of $\Gamma$ are the tangent spaces to the orbits which may exhibit dimension jumps, but equipping $\Gamma$ with the  continuous field above gives a way to handle this issue. \\

%Up to rescaling the metric $g$ by a smooth positive $G$-invariant function, we can assume that $\Vert \rho \Vert \leq 1$. 
After choosing a $G$-invariant Riemannian metric on $\g_M$, consider the transpose operator
\[ \rho^t \colon TM \rightarrow \g_M. \]
%which also has norm $\leq 1$, so $g(\rho(v), \rho(v)) \leq \Vert v \Vert_{\g_M}$ for every section $v$ of $\g_M$. We 
and define a \emph{smooth} bundle map $\varphi \colon TM \rightarrow TM$ to be the composition $\varphi=\rho\circ \rho^t$. The relevance of the map $\varphi$ lies in the fact that \emph{a vector $\xi \in T_xM$ is orthogonal to $\Gamma_x$ if and only if $\varphi_x(\xi)=0$}. \\

We now define an orbital version of Kasparov's $C^*$-algebra $\Cl_\tau(M)$, consisting of $C_0$-sections of the Clifford algebra bundle $\Cliff(TM)$ over $M$. The initial step is to attach a Clifford algebra $\Cliff(\Gamma_x)$ to each fiber $\Gamma_x$ of the orbital field $\Gamma$, and the associated family of such spaces $\Cliff(\Gamma) = \bigsqcup_{x \in M} \Cliff(\Gamma_x) \subset \Cliff(TM)$ inherits a continuous field structure from $\Gamma$.  

\begin{definition} [Kasparov  \cite{Kas16}] \label{def:CliffGamma}
The \emph{orbital Clifford algebra} $\Cl_\Gamma(M)$ is the $C^\ast$-algebra generated by sections of the continuous field $\Cliff(\Gamma)$ over $M$ vanishing at infinity (equipped with the sup-norm). 
\end{definition}

Up to isomorphism, this definition if independent of the choice of the metric on $M$. 

\vspace{2mm}

\subsection{Transverse index and Dirac elements} Let $A$ be a $G$-equivariant pseudodifferential operator with symbol $\sigma_A$ acting on sections of a $G$-equivariant Hermitian vector bundle $E$ \footnote{in the usual H{\"o}rmander $(\rho,\delta)=(1,0)$ class}.  Recall that the operator $A$ is said to be \emph{transversally elliptic} if $\supp(\sigma_A)\cap T_G^*M$ is compact, where $T_G^\ast M=\{(x,\xi) \in T^*M \, ; \, \langle \xi, \Gamma_x \rangle = 0 \}$, and $\supp(\sigma_A)$ is the support of the symbol $\sigma_A$ (i.e the subset of $T^\ast M$ where $\sigma_A$ fails to be invertible). Whence it defines naturally a K-theory class $[\sigma_A]  \in \K^0_G(T_G^*M)=\K^G_0(C_0(T_G^*M))$. \\

When $M$ is compact, Atiyah proved \cite{AtiTransEll} that the restriction $A_\pi$ ($\pi \in \wh{G}$) of $A$ to each isotypical component is Fredholm, hence $A$ has a well-defined equivariant index 
\[ \index_G(A)=\sum_{\pi \in \wh{G}} \index(A_\pi)\pi \in \widehat{R}(G)=\bZ^{\wh{G}}.\]
taking values in the generalized character ring $\widehat{R}(G)$. \\

When $M$ is not compact, Atiyah defines the index of $A$ by a reduction to the compact case, via an embedding of a relatively compact neighborhood $U$ of $\supp(\sigma_A) \cap T^*_GM$ into a compact manifold, cf. Section \ref{sec:noncompact} for more details. \\

Suppose now that $A$ has order $0$; we can reformulate the above discussion from a K-homological perspective as follows. 

\begin{proposition} \emph{(cf. for example \cite[Proposition 6.4]{Kas16})} \, The pair $(L^2(M,E), A)$ induces a $K$-homology class 
\[[A] \in K^0(G \ltimes C_0(M)) \]
that we call the \emph{transverse index of $A$}. 
\end{proposition}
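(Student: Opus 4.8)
The plan is to exhibit $(L^2(M,E),F)$ as an (even) Kasparov cycle over $(G\ltimes C_0(M),\bC)$ for a suitable bounded normalization $F$ of $A$; here, as is the relevant case for the symbols occurring later, we may assume $E=E^+\oplus E^-$ is $\bZ/2$-graded and $A$ is odd. First, equip $\mathcal H=L^2(M,E)$ with the natural covariant pair: $C_0(M)$ acts by pointwise multiplication and $G$ acts by the unitary representation $U$ induced from the action on $(M,E)$; since $U_g\phi U_g^{-1}=g\cdot\phi$, these assemble by universality into a $*$-representation $\pi$ of $G\ltimes C_0(M)$ on $\mathcal H$, compatible with the grading. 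Next, replace $A$ by a self-adjoint, $G$-equivariant, odd operator $F$ of order $0$: averaging over the compact group $G$ makes the symbol equivariant, transverse ellipticity lets us arrange $\sigma_F=\sigma_A(\sigma_A^*\sigma_A)^{-1/2}$ on a $G$-invariant conic neighbourhood of $T_G^*M$ lying outside a compact set (where $\sigma_A$ is invertible), and then we symmetrize. The crucial outcome is that $P:=F^2-1$ has order $-1$ with $\supp(\sigma_P)\cap T_G^*M$ compact, and $F-F^*$ may be taken to vanish.

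With such an $F$, the Kasparov-module axioms amount to three compactness statements for every $b\in G\ltimes C_0(M)$: that $\pi(b)(F-F^*)$, $[\pi(b),F]$ and $\pi(b)(F^2-1)$ are compact. The first is trivial. For the second, $F$ being $G$-equivariant commutes with $\pi(g)$, $g\in G$, while for $\phi\in C_c^\infty(M)$ the commutator $[\pi(\phi),F]$ is a compactly supported operator of order $-1$, hence compact; as such $b$ are dense, $[\pi(b),F]$ is compact in general. Everything thus reduces to the third condition, which is the heart of the matter: $\pi(b)P$ must be compact even though $P$ itself, being merely transversally elliptic, is very far from compact (its symbol is generally not invertible along the orbit directions).

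The main obstacle is exactly this locally-compact-resolvent condition, and it is where Atiyah's insight on transversally elliptic operators enters: the group-convolution factor inside $b$ supplies the missing smoothing in the orbit directions. I would reduce to $b=\pi(\psi)\pi(\phi)$ with $\psi\in C_c(G)$, $\phi\in C_c^\infty(M)$, so that $Q:=\pi(\phi)P$ has order $-1$, compact support in $M$, and $\supp(\sigma_Q)\cap T_G^*M$ compact. Using a $G$-invariant conic partition of unity on $T^*M$ over $\supp(\phi)$, split $Q=Q_1+Q_2$ where $\sigma_{Q_1}$ is supported in a compact piece of $T_G^*M$ — so $Q_1$, of negative order with compact support, is already compact — and $\sigma_{Q_2}$ is supported where $|\langle\xi,\Gamma_x\rangle|\ge c\,|\xi|$. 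For $Q_2$ one integrates $U_gQ_2$ against $\psi(g)$ and runs a non-stationary-phase / integration-by-parts argument in the orbit directions: each integration by parts gains an order, so $\pi(\psi)Q_2$ is smoothing, hence compact on the relatively compact region $\supp(\phi)$. (When $M$ is non-compact one may instead invoke the reduction recalled above: embed a relatively compact neighbourhood of $\supp(\sigma_A)\cap T_G^*M$ equivariantly into a closed $G$-manifold, perform the construction there, and pull back, so that the non-compact case follows from the compact one.) Finally, varying $F$ within self-adjoint $G$-equivariant order-$0$ operators with the same symbol near $T_G^*M$ gives operator-homotopic cycles, so $[A]:=[(L^2(M,E),F)]\in K^0(G\ltimes C_0(M))$ is well defined.
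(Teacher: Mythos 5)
The paper does not give its own proof here; it simply cites \cite[Proposition 6.4]{Kas16}, and the core content is really Atiyah's original observation that convolution with $C_c^\infty(G)$ smooths in the orbit directions. Your proof is a correct and essentially classical account of that argument: setup of the covariant representation, normalization $F$ with $\sigma_F^2=1$ near $T_G^*M$ off a compact set, reduction to $\pi(\psi)\pi(\phi)P$ with $\psi\in C_c^\infty(G)$, $\phi\in C_c^\infty(M)$, conic splitting into a compactly supported (hence compact) piece $Q_1$ and a piece $Q_2$ supported away from $T_G^*M$, and non-stationary phase in $g$ to show $\pi(\psi)Q_2$ is smoothing. This is the route Atiyah takes in \cite{AtiTransEll}; Kasparov's proof instead encodes the same mechanism through his symbol $C^*$-algebra $\S_\Gamma(M)$ and the smooth bundle map $\varphi=\rho\circ\rho^\top$, which buys two things: it works verbatim for non-compact $G$ acting properly (your averaging step uses compactness of $G$), and it sidesteps regularity problems with the field $\Gamma$.

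That second point is the one place your sketch needs a small repair. You describe $Q_2$ as the piece where $|\langle\xi,\Gamma_x\rangle|\ge c|\xi|$ and speak of a $G$-invariant conic partition of unity adapted to $T_G^*M$. Since $\Gamma_x$ has jumping dimension, $x\mapsto\Gamma_x$ is only lower semicontinuous and the distance to $T_G^*M$ is not a smooth function, so a conic cutoff built directly from $\Gamma$ need not be a legitimate (smooth) symbol, and the integration-by-parts vector field $v(x,\xi)\in\g$ must also be chosen smoothly across orbit-type strata. The standard fix — and exactly what the $\varphi$-formulation in Definition \ref{def:symbalg}(c) is designed for — is to phrase everything in terms of the smooth map $\varphi_x(\xi)=\rho_x\rho_x^\top\xi$: take $\chi$ a function of $\|\varphi_x(\xi)\|/\|\xi\|$, put $Q_2=(1-\chi)Q$ so that $\|\rho_x^\top\xi\|\gtrsim\|\xi\|$ on $\supp\sigma_{Q_2}$, and integrate by parts along $v=\rho_x^\top\xi/\|\rho_x^\top\xi\|^2$, which is smooth on that support. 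With this replacement your argument is complete for compact $G$ and gives the class in $K^0(G\ltimes C_0(M))$ as claimed; the excision remark at the end correctly reduces the non-compact $M$ case to the compact one.
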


If $M$ is compact, then one can also view $[A]$ as a class in the K-homology group $K^0(C^*(G))$ by crushing $M$ to a point, and the Peter-Weyl theorem shows that $[A]$ coincides with Atiyah's index. \\ 

Suppose now that $E$ is a $G$-equivariant ($\bZ_2$-graded) Clifford module bundle over $M$, and let $D$ be the associated Dirac-type operator acting on sections of $E$. As an elliptic operator, the Dirac operator induces a canonical $K$-homology class $[D] \in K^0(C_0(M))$ represented by the $K$-cycle $\left(L^2(M,E), F := D (1+D^2)^{-1/2} \right)$, and the same K-cycle also induces a K-homology class $[D] \in K^0(G \ltimes C_0(M))$, which describes the transverse index of $D$. The ingredients introduced in previous subsection offer an alternative choice of transverse index class: 

\begin{definition} \label{thm:FundClass} (cf. for example \cite{Kas16, LRS19})

\begin{enumerate}[leftmargin=7mm]
\item[(i)]  The pair $(L^2(M,E), F)$ determines a $K$-homology class 
\[ [D_{M,\Gamma}] \in K^0(G \ltimes \Cl_\Gamma(M)), \] where the crossed product $G \ltimes \Cl_\Gamma(M)$ acts on $L^2(M,E)$ by multiplication (and convolution in $G$). This class will be referred to as the \emph{transverse Dirac element}. \\

\item[(ii)] When $E$ is the (complexified) exterior algebra bundle $\Lambda_{\bC}^{\bullet} TM := \Lambda^{\bullet} TM \otimes \bC$, with $D$ being the canonical Dirac-type operator associated to the de Rham differential $d$, the class $[D_{M,\Gamma}]$ promotes to a $K$-homology class 
\[[d_{M,\Gamma}] \in  K^0(G \ltimes \Cl_{\tau \oplus \Gamma}(M)), \] 
where $\Cl_{\tau \oplus \Gamma}(M) := \Cl_\tau(M) \otimes_{C_0(M)} \Cl_\Gamma(M)$. This class will be referred to as the \emph{transverse de Rham-Dirac element}.
\end{enumerate}
\end{definition}

The action of $\Cl_{\tau \oplus \Gamma}(M)$ on $L^2(M, \Lambda_{\bC}^{\bullet} TM)$ is given, on real (co)vectors, by
\[ \xi_1 \oplus \xi_2 \longmapsto \mathrm{ext}(\xi_1) + \mathrm{int}(\xi_1) + \i (\mathrm{ext}(\xi_2) - \mathrm{int}(\xi_2)) \]
where $\xi_1, \xi_2$ are respectively sections of $TM$ and $\Gamma$. \\

The fact that one indeed gets K-homology classes this way is not absolutely obvious. Details can be found in \cite{Kas16} for (ii), and \cite{LRS19} for (i). The general idea is that the $G$-action by convolution allows to make up for the unboundedness of the elements in $[D, \Cl_\Gamma(M)]$.  

%Another alternative way to proceed is to observe that $[D_{M, \Gamma}] \in K^0(G \ltimes \Cl_\Gamma(M))$ is the image of $[D] \in K^0(G \ltimes C_0(M))$ under the projection map  $\Cl_{\Gamma}(M) \to C_0(M)$. 

\vspace{2mm}

\subsection{Transversally elliptic symbols and the symbol algebra $\frak{S}_\Gamma(M)$.}\label{sec:symbalg}

The precise relationship between the classes $[\sigma_A] \in\K^0_G(T_G^*M)$ and $[A] \in K^0(G \ltimes C_0(M))$ is the object of a highly non-trivial index theorem of Kasparov that we describe in next subsection. Before that, a replacement of the algebra $C_0(T_G^*M)$ is needed. 

\begin{definition} \, (\cite{Kas16}, Definition-Lemma 6.2) \, \label{def:symbalg}
The \emph{symbol algebra} $\S_\Gamma(M)$ is the norm-closure in $C_b(T^*M)$ (the algebra of continuous bounded functions on $T^*M$) of the set of all smooth, bounded functions $b(x,\xi)$ on $T^*M$, which are compactly supported in the $x$-variable, and satisfy (a) together with either (b) or (c) (which are equivalent assuming (a) is satisfied):
\begin{enumerate}
\item The exterior derivative $d_xb(x,\xi)$ in $x$ is norm-bounded uniformly in $\xi$, and there is an estimate $|d_\xi b(x,\xi)|\le C(1+ \Vert \xi \Vert)^{-1}$ for a constant $C$ which depends only on $b$ but not on $(x,\xi)$.

\item The restriction of $b$ to $T_G^*M$ belongs to $C_0(T_G^*M)$. 

\item For any $\varepsilon >0$ there exists a constant $c_\varepsilon>0$ such that
\[ |b(x,\xi)| \leq c_\varepsilon \frac{1+ \Vert \varphi_x(\xi)\Vert^2}{1+ \Vert \xi \Vert^2}+\varepsilon, \qquad \forall x \in M, \xi \in T_xM.\]
\end{enumerate}
\end{definition}

Loosely speaking, item (c) says that $\frak{S}_\Gamma$ consists of  symbols with negative order in the transverse directions. \\

Let $\pi_{T^*M}: T^*M \to M$ denote the canonical projection. Given a $G$-equivariant $\bZ_2$-graded Hermitian vector bundle $E$, we can similarly define a Hilbert $\S_\Gamma(M)$-module, denoted $\S_\Gamma(E)$, as the norm-closure in the space of bounded sections of the pull-back bundle $\pi_{T^*M}^\ast E$ satisfying similar conditions to those in Definition \ref{def:symbalg} (using the norm on the fibres of $\pi_{T^*M}^\ast E$ induced by the Hermitian structure). \\

From now on, we refer to transversally elliptic operators (or symbols) according to the following definition.

\begin{definition} Let $A$ be a properly supported, odd, self-adjoint $G$-invariant pseudodifferential operator of order 0 acting on sections of a $G$-equivariant $\bZ_2$-graded Hermitian vector bundle $E$. We will say that $A$ (or its symbol $\sigma_A$) is transversally elliptic if for every $a \in C_0(M)$,  $a\cdot(1-\sigma_A^2)\in \S_\Gamma(M)$. 
\end{definition}

Therefore, a transversally elliptic symbol naturally determines a class
\[ [\sigma_A]=[(\S_\Gamma(E),\sigma_A)]\in \R\KK^G(M;C_0(M),\S_\Gamma(M)).\]
By construction there is a $\ast$-homomorphism $\S_\Gamma(M)\rightarrow C_0(T_G^*M)$, hence a map
\[ \R\KK^G(M;C_0(M),\S_\Gamma(M))\rightarrow \R\KK^G(M;C_0(M),C_0(T_G^*M)).\]
In this sense the element $[\sigma_A]\in \R\KK^G(M;C_0(M),\S_\Gamma(M))$ can be viewed as a `refinement' of the `naive' class in $\R\KK^G(M;C_0(M),C_0(T_G^*M))$ defined by the symbol. 

\vspace{2mm}

\subsection{Kasparov's index theorem} To state Kasparov's index theorem relating the classes  $[A] \in K^0(G \ltimes C_0(M))$ and $ [\sigma_A]\in \R\KK^G(M;C_0(M),\S_\Gamma(M))$, it will be convenient to introduce the $C^*$-algebra
\[ \Cl_{\Gamma}(TM) := C_0(TM) \otimes_{C_0(M)} \Cl_{\Gamma}(M). \]
which is KK-equivalent to the symbol algebra $\S(M)$ through the KK-class described in the definition below. 

\begin{definition} \label{def:transell} \cite[p. 1344]{Kas16} \,
The element $[\f_{M,\Gamma}]\in \R\KK^G(M;\S_\Gamma(M),\Cl_\Gamma(TM))$ is the class represented by the pair $(\Cl_\Gamma(TM),\f_{M,\Gamma})$ where at a point $(x,\xi) \in T_xM$, the operator $\f_{M,\Gamma}(x,\xi)$ is left Clifford multiplication by $-\i \varphi_x(\xi)(1+ \Vert \varphi_x(\xi) \Vert^2)^{-1/2}$.
\end{definition}

This definition is in fact one of the main reasons to introduce the symbol algebra $\frak{S}_{\Gamma}(M)$. Being the symbol class of an orbital Dirac element, the class $[\f_\Gamma]$ should be thought of as an orbital Bott element. For convenience, let us define an alternative receptacle for symbol classes.

\begin{definition} Let $A$ be a transversally elliptic operator, and let $[\sigma_A] \in \R\KK(M; C_0(M), \S_{\Gamma}(M))$ be its standard symbol class. The \emph{tangent Clifford symbol class} of $A$ is the element $[\sigma_A^{\mathrm{tcl}}]$ obtained as the KK-product:
\[ [\sigma_A^{\mathrm{tcl}}] := [\sigma_A] \otimes_{\S_{\Gamma}(M)} [\f_{\Gamma}] \in \R\KK(M, C_0(M), \Cl_{\Gamma}(TM)) \]
\end{definition}

Heuristically, the meaning of this definition is that one completes $\sigma_A$ into an elliptic symbol, which is paired with the Dolbeault element to recover the transverse index of $A$ from the classical index theorem. To convert this vague idea into a theorem, we will need to define the appropriate Dolbeault element. \\ 

Notice first that the $C^*$-algebras $\Cl_{\Gamma}(TM)$ and $\Cl_{\tau \oplus \Gamma}(M)$ are KK-equivalent. Indeed, recall that the $C^*$-algebras $C_0(TM)$ are KK-equivalent via an element $[d_\xi]\in \R\KK^G(M;C_0(TM),\Cl_\tau(M))$ referred to as the \emph{fiberwise Dirac element}. It is represented by a family of Dirac operators acting on the fibres $TM$, i.e the pair $\left( (L^2(T_xM) \otimes \Cl_{\tau_x})_{x \in M}, (F_x = D_x (1+D^2_x)^{-1/2})_{x \in M} \right)$, where for each $x \in M$, $D_x$ is a Dirac operator acting on the fiber $T_xM$. Its inverse can be described explicitely via a family of fiberwise Bott elements. The element 
\[ [d_\xi] \otimes_{C_0(M)} 1_{\Cl_\Gamma(M)} \in \R\KK^G(M;\Cl_\Gamma(TM),\Cl_{\tau \oplus \Gamma}(M))) \]
then implements a KK-equivalence between $\Cl_{\Gamma}(TM)$ and $\Cl_{\tau \oplus \Gamma}(M)$. Motivated by the fact that the Dirac element on $TM$ induced by the Dolbeault operator splits as a product between $[d_\xi]$ and the Dirac element induced by the de Rham operator, Kasparov makes the following definiton. 

\begin{definition} \textbf{}
%\begin{enumerate}[leftmargin=7mm] 
%\item[(i)] The \emph{fiberwise transverse Dirac element} is the class $[d_{\xi, \Gamma}]$ obtained via the KK-product:
%\[ [d_{\xi, \Gamma}] := [\f_{\Gamma}] \otimes_{\Cl_{\Gamma}(TM)}([d_\xi] \otimes_{C_0(M)} 1_{\Cl_\Gamma(M)})   \in \R\KK^G(M; \S_{\Gamma}(M),\Cl_{\tau \oplus \Gamma}(M))) \]
%\item[(ii)] The \emph{transverse Dolbeault element} is the class $[d_{\xi, \Gamma}]$ obtained via the KK-product:
%\[ [\overline{\partial}_{M, \Gamma}] := j^G ([d_{\xi, \Gamma}]) \otimes_{G \ltimes Cl_{\tau \oplus \Gamma}(M)} [d_{M, \Gamma}]  \in K_G^0(G \ltimes \S_{\Gamma}(M)) \]
%where $j^G$ is the descent map. \\[-2mm]
The {transverse Dolbeault element} is the class $[\overline{\partial}^{\mathrm{cl}}_{TM,\Gamma}]$ obtained as the KK-product
\[ [\overline{\partial}^{\mathrm{cl}}_{TM,\Gamma}]=j^G([d_\xi]\wh{\otimes}_{M}1_{\Cl_\Gamma(M)})\wh{\otimes}_{G\ltimes \Cl_{\tau\oplus\Gamma}(M)}[d_{M,\Gamma}] \in \KK(G\ltimes \Cl_\Gamma(TM),\bC).
\]%\footnote{Recall that for every $G$-$C^*$-algebras $A$ and $B$, the descent map $j^G: KK^G(A,B) \to KK(G \ltimes A, G \ltimes B)$.}.
\end{definition}

We can now state Kasparov's index theorem.

\begin{theorem} \emph{(Kasparov, \cite[Theorem 8.18]{Kas16})} \label{thm:KaspInd}

Let $(M, g)$ be a complete Riemannian manifold (without boundary) equipped with a proper and isometric action of a Lie group $G$. Let $A$ be a transversally elliptic operator of order $0$ on $M$, with symbol $\sigma_A$. Then
\[ [A]=j^G[\sigma_A^{\mathrm{tcl}}] \otimes_{G\ltimes \Cl_\Gamma(TM)}[\overline{\partial}^{\mathrm{cl}}_{TM,\Gamma}] \in \K^0(G\ltimes C_0(M)),\]
where $j^G$ denotes the descent map. 
\end{theorem}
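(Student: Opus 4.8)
The plan is to deduce the identity from the classical index theorem of Atiyah--Singer in its KK-theoretic form, by "completing" the transversally elliptic symbol of $A$ into an honestly elliptic symbol along the orbit directions --- the cost being that one must pass to the crossed product by $G$.

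\textbf{Step 1: reduction to the compact, elliptic situation.} First I would observe that both sides of the asserted equality are additive and depend only on the symbol class $[\sigma_A]\in\R\KK^G(M;C_0(M),\S_\Gamma(M))$: for the right-hand side this is built into the definitions, whereas for the left-hand side it is the fact --- proved via homotopy invariance and excision --- that the transverse index factors through a homomorphism $\R\KK^G(M;C_0(M),\S_\Gamma(M))\to K^0(G\ltimes C_0(M))$. Since $\supp(\sigma_A)$ meets $T_G^*M$ in a compact set, I would then restrict to a relatively compact $G$-invariant neighbourhood of $\supp(\sigma_A)\cap T_G^*M$ and, following Atiyah, embed it equivariantly into a compact $G$-manifold, reducing the whole statement to the case in which $M$ is compact.

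\textbf{Step 2: identifying the transverse Dolbeault element and concluding.} The key point is that the orbital Bott factor $[\f_{M,\Gamma}]$ entering $[\sigma_A^{\mathrm{tcl}}]=[\sigma_A]\otimes_{\S_\Gamma(M)}[\f_{M,\Gamma}]$ is invertible precisely off $T_G^*M$, so the "completed" symbol has characteristic set $\supp(\sigma_A)\cap T_G^*M$, which is compact --- in other words it is the symbol of an operator that is elliptic in the usual sense, and $[A]$ is exactly its equivariant index. On the other side, using the KK-equivalence $\Cl_\Gamma(TM)\simeq\Cl_{\tau\oplus\Gamma}(M)$ implemented by $[d_\xi]\otimes_{C_0(M)}1_{\Cl_\Gamma(M)}$ together with the factorisation, at the level of KK-classes, of the Dolbeault--Dirac operator on $TM$ as the product of the fibrewise Dirac element with the de Rham--Dirac element, the transverse Dolbeault element $[\overline{\partial}^{\mathrm{cl}}_{TM,\Gamma}]$ is recognised as the Poincar\'{e}-duality (fundamental) class of $M$ twisted by the orbital Clifford data. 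With these identifications, the right-hand side is literally "symbol class $\otimes$ fundamental class" for an elliptic operator on a compact $G$-manifold, and the equality becomes the classical Atiyah--Singer/Kasparov index theorem --- whose proof proceeds by the usual embedding-into-Euclidean-space, Thom-isomorphism and Bott-periodicity argument (or, equivalently, via the tangent groupoid).

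\textbf{Main obstacle.} The essential difficulty is analytic rather than formal. Clifford multiplication by the orbital vector fields $\rho_x(v)$ is unbounded, so none of $[\f_{M,\Gamma}]$, $[d_{M,\Gamma}]$, $[\overline{\partial}^{\mathrm{cl}}_{TM,\Gamma}]$, or the intermediate KK-products is a Kasparov module "on the nose": one must verify that after the descent $j^G$ and after forming the products the operators involved are properly supported and bounded modulo compacts. This is exactly where the transverse-decay estimate (c) in the definition of $\S_\Gamma(M)$ and the properness of the $G$-action are used, convolution by $G$ compensating for the non-compactness of $T_G^*M$ in the orbit directions. Once that is in place, the remaining tasks --- associativity of the several KK-products, their compatibility with $j^G$, and the admissibility of the homotopies used in Step 1 --- are bookkeeping.
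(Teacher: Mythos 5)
The paper does not give a proof of this statement: it is quoted from Kasparov's paper as \cite[Theorem 8.18]{Kas16} and used throughout as a black box, so there is no proof in the paper to compare your attempt against. That said, the attempt has a genuine gap and should not be accepted as a proof.

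Your Step 1 and the general shape of Step 2 are heuristically reasonable, and you correctly identify the unbounded orbital Clifford multiplication and the decay condition (c) in the definition of $\S_\Gamma(M)$ as the relevant analytic controls. But the assertion that carries the whole weight of Step 2 --- that after ``completing'' the symbol by the orbital Bott element the identity ``becomes the classical Atiyah--Singer/Kasparov index theorem for an elliptic operator on a compact $G$-manifold'' --- is false, and the argument collapses there. The class $[\sigma_A^{\mathrm{tcl}}]$ does not live in $K^0_G(T^*M)$; it lives in $\R\KK^G(M;C_0(M),\Cl_\Gamma(TM))$, and $\Cl_\Gamma(TM)=C_0(TM)\otimes_{C_0(M)}\Cl_\Gamma(M)$ carries the factor $\Cl_\Gamma(M)$, a genuinely non-commutative $C^*$-algebra whose fibres are Clifford algebras of the jumping orbit directions. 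The ``completed'' class is therefore not the symbol of any elliptic pseudodifferential operator on $M$; at any fixed point of the action $\Cl_\Gamma$ collapses to $\bC$, there is nothing to complete, and the restricted symbol still vanishes on the zero section. The operator $A$ remains only transversally elliptic, and $[A]\in K^0(G\ltimes C_0(M))$ is a K-homology class of the crossed product --- not the index of an elliptic operator to which the classical theorem could be applied. Establishing that the $\Cl_\Gamma$-twisted symbol pairs with the $\Cl_\Gamma$-twisted Dolbeault element to recover $[A]$ is exactly the nontrivial content of \cite[Theorem 8.18]{Kas16}, and outside the locally free case it cannot be obtained from the elliptic theorem. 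Your ``Main obstacle'' paragraph names several of the right technical points but then writes them off as bookkeeping; in Kasparov's actual proof these are a substantial chain of new lemmas (on products with $\Cl_\Gamma$-module structures, on the compatibility of $j^G$ with the fibrewise Dirac element and the transverse de Rham--Dirac element, and on homotopies of the Bott element $\f_{M,\Gamma}$), not routine verifications. In short, the proposal correctly identifies the shape of the result but fills the hole at its centre by invoking, in disguised form, the very theorem it is supposed to prove.
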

%\[ [A] = j^G [\sigma_A] \otimes_{\S(M)} [\overline{\partial}_{M, \Gamma}] \in K^0(G \ltimes C_0(M)) \]

It is not hard to show that in the setup of the classical index theorem (i.e $G$ is the trivial group), the right-hand-side is exactly the topological index of Atiyah--Singer \cite{AtiSinI}. In general, we can therefore refer to it as a \emph{topological transverse index}. \\

When $\sigma_A$ is transversally elliptic in Atiyah's sense, note that $[\sigma_A^{\mathrm{tcl}}]$ can also be seen as a class in $K_0(\Cl_{\Gamma}(TM))$; the next section discusses this point further together with the compatibility between Atiyah's and Kasparov's approaches. 

\vspace{2mm}

\subsection{Relationship to Atiyah's index and reduction to compact manifolds}  \label{sec:noncompact}

Atiyah \cite{AtiTransEll} defines the transverse index more generally for any element $\alpha_M \in \K^0_G(T_GM)$ where $M$ is a not-necessarily compact $G$-manifold.  The construction proceeds as follows.  Atiyah proves \cite[Lemma 3.6]{AtiTransEll} that one can find a $\bZ_2$-graded Hermitian vector bundle $E=E_0\oplus E_1$ on $M$ and $\sigma_M \in C_b(TM,\pi_{TM}^\ast \End(E))$ an odd, self-adjoint bundle endomorphism  whose restriction to $T_GM$ represents the class $\alpha$, and such that one has $\sigma_M^2=1$ outside $\pi_{TM}^{-1}(K)$ for a $G$-invariant compact subset $K$ of $M$.  Choose a Hermitian vector bundle $F \rightarrow M$ such that $\ti{E}_0=E_0\oplus F$ is trivial, and fix a trivialization.  Let $\ti{E}_1=E_1\oplus F$ and $\ti{\sigma}_M=\sigma_M\oplus \id_F$.  Via $\ti{\sigma}_M$ we obtain a trivialization of $(E_1\oplus F)|_{M\setminus K}$.  Choose a relatively compact $G$-invariant open neighborhood $U$ of $K$, and let $\iota_{U,M}\colon U \hookrightarrow M$ be the inclusion; we will use the same symbol for the induced open inclusion $T_GU\hookrightarrow T_GM$.  The pair $(\ti{E}|_U,\ti{\sigma}_M|_U)$ represents a class $\alpha_U\in \K^0_G(T_GU)$ and $\alpha_M=(\iota_{U,M})_\ast \alpha_U$ by construction.  Choose a $G$-equivariant open embedding $\iota_{U,X}$ of $U$ into a compact $G$-manifold $X$; again we use the same symbol for the induced open inclusion $T_GU\hookrightarrow T_GX$.  Using the trivializations over $U\setminus K$, the bundle $\ti{E}|_U$ and endomorphism $\ti{\sigma}_M|_U$ can be extended trivially to $X$ (denoted $\ti{E}_X$, $\ti{\sigma}_X$ respectively) and represent the class $\alpha_X=(\iota_{U,X})_\ast \alpha_U \in \K^0_G(T_GX)$.  Atiyah defines
\[ \index_G(\alpha_M)=\index_G(A_X) \in \widehat{R}(G) \]
where $A_X$ is any transversally elliptic operator on $X$ such that the (naive) K-theory class of its symbol is $\alpha_X$.  Atiyah proves an excision property  \cite[Theorem 3.7]{AtiTransEll} showing that the index can be determined just from data on $U$, and hence the construction is independent of the various choices.\\

In Kasparov's framework, this construction can be reformulated as follows. Suppose that one manages to choose $\sigma_M$ such that, in addition to the conditions above, one has $(1-\sigma_M^2)\in \S_\Gamma(M)$.  Then $\sigma_M$ determines a class $[\sigma_{M,\tn{c}}]=[(\S_\Gamma(E),\sigma_M)]\in \KK^G(\bC,\S_\Gamma(M))$ refining the class $\alpha_M$.  The subscript `c' is to emphasize that this is a K-theory class whose support is compact over $M$, in contrast with the symbols defining elements of the group $\R\KK^G(M;C_0(M),\S_\Gamma(M))$ that were considered in Section \ref{sec:symbalg}.  One then obtains similar classes $[\ti{\sigma}_{U,\tn{c}}]=[(\S_\Gamma(\ti{E}|_U),\ti{\sigma}|_U)]\in \KK^G(\bC,\S_\Gamma(U))$ refining $\alpha_U$, $[\ti{\sigma}_{X,\tn{c}}]\in [(\S_\Gamma(\ti{E}_X),\ti{\sigma}_X)]\in \KK^G(\bC,\S_\Gamma(X))$ refining $\alpha_X$, and moreover
\begin{equation} 
\label{eqn:functorinclusion}
[\ti{\sigma}_{X,\tn{c}}]=(\iota_{U,X})_\ast[\ti{\sigma}_{U,\tn{c}}], \qquad [\sigma_{M,\tn{c}}]=(\iota_{U,M})_\ast[\ti{\sigma}_{U,\tn{c}}].
\end{equation}
Let $[\ti{\sigma}_{X,\tn{c}}^{\tn{tcl}}]$, $[\ti{\sigma}_{U,\tn{c}}^{\tn{tcl}}]$, $[\sigma_{M,\tn{c}}^{\tn{tcl}}]$ be the corresponding tangential Clifford symbols obtained by KK-product with $\f_{X,\Gamma}$, $\f_{U,\Gamma}$, $\f_{M,\Gamma}$ respectively.  Functoriality of the classes $\f_{-,\Gamma}$ under open embeddings implies the tangential Clifford symbols satisfy analogous formulas. \\

Let $p \colon X \rightarrow \pt$ be the collapse map, and $[\sigma_{A,X}]\in \R\KK^G(X;C(X),\S_\Gamma(X))$ the class defined by the symbol of $A_X$, so that $p_\ast[\sigma_{A,X}]=[\ti{\sigma}_{X,\tn{c}}]$.  By Theorem \ref{thm:KaspInd},
\[ \index(A_X)=p_\ast[A_X]=j^G([\ti{\sigma}_{X,\tn{c}}^{\tn{tcl}}])\wh{\otimes}_{G\ltimes \Cl_\Gamma(TX)}[\ol{\partial}^{\tn{cl}}_{TX,\Gamma}].\]
Equation \eqref{eqn:functorinclusion}, as well as the functoriality of the KK-product and of the transverse Dolbeault class, give the equivalent formula
\[ \index(A_X)=j^G([\ti{\sigma}_{U,\tn{c}}^{\tn{tcl}}])\wh{\otimes}_{G\ltimes \Cl_\Gamma(TU)}[\ol{\partial}_{TU,\Gamma}]=j^G([\sigma_{M,\tn{c}}^{\tn{tcl}}])\wh{\otimes}_{G\ltimes \Cl_\Gamma(TM)}[\ol{\partial}^{\tn{cl}}_{TM,\Gamma}].\]
We thus obtain the following formula for the index in Atiyah's sense of $\alpha_M=\iota_{T_GM}^\ast[\sigma_{M,\tn{c}}]\in \K^0_G(T_GM)$:
\begin{equation*}
\label{eqn:indexopenmfld}
\index(\iota_{T_GM}^\ast[\sigma_{M,\tn{c}}])=j^G([\sigma_{M,\tn{c}}^{\tn{tcl}}])\wh{\otimes}_{G\ltimes \Cl_\Gamma(TM)}[\ol{\partial}^{\tn{cl}}_{TM,\Gamma}].
\end{equation*}

\vspace{2mm}

\section{Localization}
We begin with a summary of Paradan--Vergne's setup, following closely their work \cite{PV14Wit} apart from its index theoretic content, which is treated via Kasparov's formalism and KK-theory. The section ends with a KK-theoretical proof of their non-abelian localization formula and a discussion explaining why this point of view unifies different approaches to the quantization commutes with reduction problem. 

\subsection{Moment map and excision} \label{sec:moment map} As in previous section, we work with a complete Riemannian manifold $(M, g)$  equipped with an action of a compact  Lie group $G$. Moreover, we shall identify $\g$ and its dual $\g^*$ via a $G$-invariant product on $\g$ when appropriate. \\ 

Consider in addition that we have a smooth $G$-equivariant map
\[ \nu : M \to \g \simeq \g^{*} \]
such that the zero set $\Zv$ of the (orbital) vector field:
\[ \!\bm{\nu}(x) = \rho_x(\!\nu(x)) =  \left. \dfrac{d}{dt}\right\vert_{t=0} \exp(t \!\nu(x)) \cdot x \] 
is \emph{compact}. As is customary when dealing with the quantization commutes with reduction problem, we suppose that $0$ is a regular value of $\!\nu$. \\

We now use the vector field $\!\bm{\nu}$ to perturb the symbol of the Dirac operator.  

\begin{definition}
Let $\sigma_{\!\nu} \in C^\infty(T^*M, \pi_{T^*M}^* \End(E))$ be the symbol defined by
\[ \sigma_{\!\nu}(x, \xi) = \i c(\xi + \!\bm{\nu}) \, ; \, \forall (x,\xi) \in T^*M \]
\end{definition}

This is a transversally elliptic symbol, as the set on which $\sigma_{\!\nu}$ fails to be invertible is precisely the compact set $\Zv$. Choose a relatively compact neighborhood $U$ of $\Zv$. Without loss of generality, we can suppose that $|\!\bm\nu|=1$ outside $U$. Then, the vector field $\!\bm{\nu}$ induces a K-theory class
\[  [\!\nu] := [(\Cl_{\Gamma}(M), c(\!\bm{\nu})] \in K_0(\Cl_{\Gamma}(M)) \]
In \cite{LRS19}, it is proved that the transverse index 
\[ \mathrm{Index}_G [\sigma_{\!\nu}] \in \widehat{R}(G) \]  
of $[\sigma_{\!\nu}] \in K^0(\Cl_{\Gamma}(TM))$ is provided by the KK-product:
\[ \mathrm{Index}_G [\sigma_{\!\nu}]  = j^G [\!\nu]  \otimes_{\Cl_{G \ltimes \Gamma}(M)} [D_{M,\Gamma}] = j^G[\sigma_{\!\nu}] \otimes_{G\ltimes \Cl_\Gamma(TM)}[\overline{\partial}^{\mathrm{cl}}_{TM,\Gamma}] \in \widehat{R}(G)\]
Denoting $\iota_{U,M}: \Cl_\Gamma(U) \to \Cl_\Gamma(M)$ the natural extension-by-0 homomorphism, one sees without difficulty that $(\iota_{U,M})_{*}[\!\nu|_{U}] = [\!\nu]$, where $[\!\nu|_{U}] \in K^0(\Cl_\Gamma(U))$ is the class defined similarly to $[\!\nu]$ from the restriction $\!\nu|_U$ of $\!\nu$ to $U$.  Associativity of the KK-product together with the formula above applied to the manifold $U$ therefore yield the following excision/localization result:

\begin{proposition} \label{prop: localization 1} Let $U$ be a relatively compact open neighborhood of the zero set $\Zv$. Then, 
\begin{equation} \label{eqn: localization 1}
\mathrm{Index}_G [\sigma_{\!\nu}] = [\!\nu|_U]  \otimes_{\Cl_{G \ltimes \Gamma}(U)} [D_{U,\Gamma}] = j^G[\left.\sigma_{\!\nu}^\mathrm{tcl}\right|_U] \otimes_{G\ltimes \Cl_\Gamma(TU)}[\overline{\partial}^{\mathrm{cl}}_{TU,\Gamma}] \in \widehat{R}(G) 
\end{equation}
\end{proposition}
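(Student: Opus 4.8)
The plan is to deduce Proposition \ref{prop: localization 1} from the already-established global formula
\[ \mathrm{Index}_G [\sigma_{\!\nu}] = j^G [\!\nu] \otimes_{\Cl_{G \ltimes \Gamma}(M)} [D_{M,\Gamma}] \]
of \cite{LRS19} by an excision argument, exactly parallel to the reduction-to-compact-manifolds discussion in Section \ref{sec:noncompact}. First I would record the factorization $[\!\nu] = (\iota_{U,M})_{*}[\!\nu|_U]$ in $K_0(\Cl_\Gamma(M))$, which the excerpt already notes follows "without difficulty": both classes are represented by Clifford multiplication by $c(\!\bm\nu)$, which is invertible (indeed of norm $1$) off the compact set $\Zv \subset U$, so the cycle for $[\!\nu]$ is literally the extension-by-zero of the cycle for $[\!\nu|_U]$ under $\iota_{U,M}\colon \Cl_\Gamma(U)\to\Cl_\Gamma(M)$.

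Next I would push this through the descent functor: since $j^G$ is functorial, $j^G[\!\nu] = j^G(\iota_{U,M})_* [\!\nu|_U] = (j^G\iota_{U,M})_* \, j^G[\!\nu|_U]$ in $\KK(\bC, G\ltimes\Cl_\Gamma(M))$. The second ingredient is naturality of the transverse Dirac element under the open inclusion: the cycle $(L^2(U,E|_U),F_U)$ for $[D_{U,\Gamma}]$ and the cycle $(L^2(M,E),F)$ for $[D_{M,\Gamma}]$ are compatible in the sense that, under the inclusion of $C^*$-algebras induced by $\iota_{U,M}$, the class $[D_{M,\Gamma}]$ restricts to $[D_{U,\Gamma}]$; equivalently, $(\iota_{U,M})^*$ or the associated restriction in $\KK$-theory sends $[D_{M,\Gamma}]$ to $[D_{U,\Gamma}]$. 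Then associativity of the Kasparov product, together with the compatibility of the product with the morphism $j^G\iota_{U,M}$ (the standard identity $(\phi_* x)\otimes y = x \otimes (\phi^* y)$), gives
\[ j^G[\!\nu]\otimes_{\Cl_{G\ltimes\Gamma}(M)}[D_{M,\Gamma}] = \big((j^G\iota_{U,M})_* j^G[\!\nu|_U]\big)\otimes_{\Cl_{G\ltimes\Gamma}(M)}[D_{M,\Gamma}] = j^G[\!\nu|_U]\otimes_{\Cl_{G\ltimes\Gamma}(U)}[D_{U,\Gamma}], \]
which is the left-hand equality of \eqref{eqn: localization 1}. The right-hand equality — the rewriting in terms of $j^G[\sigma_{\!\nu}^{\mathrm{tcl}}|_U]\otimes[\overline\partial^{\mathrm{cl}}_{TU,\Gamma}]$ — is then obtained by applying to the manifold $U$ the same chain of identities (fiberwise Dirac element, $[\f_{U,\Gamma}]$, transverse Dolbeault element) that produced the identity $j^G[\!\nu]\otimes[D_{M,\Gamma}] = j^G[\sigma_{\!\nu}]\otimes[\overline\partial^{\mathrm{cl}}_{TM,\Gamma}]$ globally; concretely this is the content of the definitions of $[\sigma_A^{\mathrm{tcl}}]$ and $[\overline\partial^{\mathrm{cl}}_{TM,\Gamma}]$ combined with Theorem \ref{thm:KaspInd} and the $\KK$-equivalence $\Cl_\Gamma(TU)\simeq\Cl_{\tau\oplus\Gamma}(U)$, all of which are functorial under the open embedding $U\hookrightarrow M$.

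The main obstacle is the compatibility of the transverse Dirac element $[D_{M,\Gamma}]$ with restriction to the open set $U$. Unlike the symbol class, which lives in a $K$-theory group and behaves well under pushforward along open inclusions essentially by definition, $[D_{M,\Gamma}]$ is a $K$-homology class and the cycle involves the global operator $F = D(1+D^2)^{-1/2}$, which is non-local; one must check that the difference between the $G\ltimes\Cl_\Gamma(U)$-cycle built from $F$ and the one built from $F_U$ is a degenerate/operator-homotopic perturbation, i.e. that $[D_{M,\Gamma}]$ genuinely lies in the image of $(j^G\iota_{U,M})_*$ paired appropriately, or rather that the product only sees the behaviour over $U$ because $[\!\nu|_U]$ is supported there. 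This is precisely the kind of locality statement underlying Atiyah's excision theorem \cite[Theorem 3.7]{AtiTransEll} and its $\KK$-theoretic counterpart used in Section \ref{sec:noncompact}; I would invoke it in the same form here, noting that the compact support of the cycle for $[\!\nu|_U]$ over $M_0$ (respectively $U$) is what makes the product insensitive to the ambient manifold, so that no new analytic work beyond what is cited is required.
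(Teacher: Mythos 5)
Your argument has the same skeleton as the paper's: the factorization $(\iota_{U,M})_*[\!\nu|_U]=[\!\nu]$, descent, associativity of the Kasparov product, and the global formula from \cite{LRS19}. Where you and the paper part ways is precisely the point you flag yourself. You make the middle equality of \eqref{eqn: localization 1} --- the one involving $[D_{U,\Gamma}]$ --- the primary chain of the argument, which forces you to justify the restriction statement $(j^G\iota_{U,M})^*[D_{M,\Gamma}]=[D_{U,\Gamma}]$ for the transverse Dirac element. You correctly observe this is nontrivial because $F=D(1+D^2)^{-1/2}$ is nonlocal, but then you only propose to ``invoke'' Atiyah's excision theorem, which is stated at the level of the symbolic index, not of $K$-homology cycles for $G\ltimes\Cl_\Gamma$-algebras; as written this is a gap, not a citation. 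The paper deliberately sidesteps this: the text explicitly says the middle expression ``is not necessary in the argument above'' and that the equality of the leftmost and rightmost sides is obtained ``directly as in Section \ref{sec:noncompact}''. In that section the excision is carried by functoriality of the tangential Clifford symbol classes and of the transverse Dolbeault elements under open embeddings, which is already part of Kasparov's \cite{Kas16} machinery, so nothing new needs to be verified. Your route is morally equivalent, and the middle equality does hold, but to make your proof complete you should either switch the load-bearing step onto the Dolbeault functoriality (as the paper does) or actually carry out the compact-perturbation/degenerate-cycle argument showing that $(L^2(M,E),F)$ and $(L^2(U,E|_U),F_U)$ represent the same class over $G\ltimes\Cl_\Gamma(U)$, rather than deferring it.
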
 

(The rightmost side is Kasparov's formula applied to the manifold $U$). The middle part of the formula is set merely in view of the final discussion of the paper, but is not necessary in the argument above: the relationship between the leftmost and rightmost sides can in fact be derived directly as in Section \ref{sec:noncompact}.
\vspace{2mm} 

\subsection{Coadjoint orbits and slices}
Next, we describe, following Paradan--Vergne,  a neighborhood of the zero set $\Zv$ in which the Equation (\ref{eqn: localization 1}) becomes a fixed-point formula.  For that purpose, we need an additional piece of structure on the smooth map $\!\nu$. It is a weakening of the notion of Hamiltonian action which requires neither the 2-form of $M$ to be symplectic, nor non-degenerate. 

\begin{definition}  \label{def:moment map}
We say that the smooth $G$-equivariant map $\!\nu$ is a \emph{moment map} if there exists a $G$-invariant closed 2-form $\omega$ on $M$ such that 
\[ \mathrm{int}(\bm{\beta}) \omega + d \langle \!\nu, \beta \rangle = 0 \, , \, \forall \beta \in \g \]
where $\bm{\beta}(x) = \left. \frac{d}{d \!t}\right|_{t=0} e^{-t\beta} \cdot x$ is the vector field on $M$ generated by $\beta \in \g$, and $\mathrm{int}(\bm{\beta})\omega$ denotes the contraction of the differential form $\omega$ by the vector field $\bm{\beta}$.
\end{definition}

Here is an example of moment map to have in mind, because of its relevance to the quantization commutes with reduction problem.  

\begin{example} Let $L$ be a $G$-equivariant line bundle endowed with a connection $\nabla$, and let $\omega = -\i R$, where $R$ is the curvature 2-form of $\nabla$. The \emph{Lie derivative} $\mathcal{L}_{\!\nu}$ acting on smooth sections of $L$ is defined by
\[ (\mathcal{L}_{\beta} \varphi)(x)=\left. \dfrac{d}{dt} \right\vert_{t=0} e^{t\beta} \cdot \phi(e^{-t \beta} x) .\]
Then, the smooth map $\nu :M  \to \g^\ast$ defined by the equation
\[ \pair{\nu}{\beta} = \nabla_{\bm{\beta}}-\mathcal{L}_{\beta} \, , \, \, \, \, \forall \beta \in \g \]
is a moment map.
\end{example}

Under the additional assumption that $\!\nu$ is a moment map, one observes that the compact zero set $\nu(\Zv) \subset \g^*$ is a finite union of coadjoint orbits. Then, let $\mathcal{B}$ be a finite set consisting of representatives of those coadjoint orbits covering $\nu(\Zv)$. This leads to the following description of $\Zv$:  
\[ \Zv = \bigsqcup_{\beta \in \mathcal{B}} G \cdot \big(M^{\beta} \cap \nu^{-1}(\beta) \big) \]
where $M^{\beta} = \left\{ x \in M \, ; \, \bm{\beta}(x) = 0  \right\}$ is the stabilizer of the infinitesimal action of $\beta \in \mathcal{B} \subset \g^{*}$. (Recall that $\bm{\beta}$ is the vector field $\bm{\beta}(x) = \left. \frac{d}{d \!t}\right|_{t=0} e^{-t\beta} \cdot x$). \\

We now describe, for each component $G \cdot \big(M^{\beta} \cap \nu^{-1}(\beta) \big)$ in the decomposition above, a neighborhood diffeomorphic to a `slice'. In what follows, let $\g_{\beta} = \mathrm{Lie}(G_\beta)$, where $G_\beta \subset G$ is the stabilizer subgroup of $\beta$ with respect to the coadjoint action.   

\begin{proposition} \cite[Proposition 8.4]{PV14Wit}
Let $\beta \in \mathcal{B} \smallsetminus \{0\}$. Then, for any sufficiently small neighborhood $W_\beta \subset \g^{*}_\beta$ of $\beta$ in $\g^{*}_\beta$, 
\begin{enumerate}[leftmargin=7mm]
\item[\emph{(i)}] $V_\beta := \nu^{-1}(W_\beta)$ is a $G_\beta$-invariant submanifold of $M$. 
\item[\emph{(ii)}] $G \cdot V_\beta = \nu^{-1}(G \cdot W_\beta)$ is diffeomorphic to the slice $U_\beta := G \times_{G_\beta} V_\beta$.  
\end{enumerate}
Moreover, if $\beta=0$ is a regular value of $\nu$, then $G$ acts locally freely on the submanifold $\nu^{-1}(0)$. 
\end{proposition}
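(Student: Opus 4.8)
The plan is to deduce (i) and (ii) by pulling back along $\nu$ the standard local normal form (slice theorem, equivalently a $G$-invariant tubular neighbourhood) for the coadjoint orbit $G\cdot\beta$, and to obtain the last assertion directly from the moment map identity together with the hypothesis that $0$ is a regular value of $\nu$. Identify $\g^{*}\cong\g$ via the fixed $G$-invariant inner product and set $\mathfrak{q}:=\g_\beta^{\perp}$, so that $\g=\g_\beta\oplus\mathfrak{q}$ and dually $\g^{*}=\g_\beta^{*}\oplus\mathfrak{q}^{*}$ are $\Ad(G_\beta)$-invariant orthogonal splittings. A short Lie-algebra computation, using that $\ad_\beta$ is skew-symmetric for the invariant inner product, identifies $T_\beta(G\cdot\beta)=\mathfrak{q}^{*}$ inside $\g^{*}$, so $\g_\beta^{*}$ is a $G_\beta$-invariant linear complement to the tangent space of the orbit; the slice theorem then produces a $G_\beta$-invariant open ball $W^{0}\ni\beta$ in $\g_\beta^{*}$ such that $G\cdot W^{0}$ is open in $\g^{*}$ and $[g,\mu]\mapsto g\cdot\mu$ is a $G$-equivariant diffeomorphism $G\times_{G_\beta}W^{0}\xrightarrow{\sim}G\cdot W^{0}$; in particular the slice property $g\cdot W^{0}\cap W^{0}\neq\emptyset\Rightarrow g\in G_\beta$ holds. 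I take $W_\beta$ to be any sufficiently small $G_\beta$-invariant neighbourhood of $\beta$ contained in $W^{0}$ (every neighbourhood of $\beta$ contains such a one, $G_\beta$ being compact and fixing $\beta$), the precise size being fixed in the next step.

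Write $\nu=\nu_{\g_\beta}+\nu_{\mathfrak{q}}$ for the components of $\nu$ in $\g^{*}=\g_\beta^{*}\oplus\mathfrak{q}^{*}$, so that $V_\beta=\nu^{-1}(W_\beta)=\nu_{\g_\beta}^{-1}(W_\beta)\cap\nu_{\mathfrak{q}}^{-1}(0)$. The key point is the identity $d(\nu_{\mathfrak{q}})_x\circ\rho_x=L_{\nu(x)}$, an immediate consequence of the $G$-equivariance of $\nu$, where $L_\mu\colon\g\to\mathfrak{q}^{*}$ is the linear map $\xi\mapsto\mathrm{pr}_{\mathfrak{q}^{*}}\big(\ad^{*}(\xi)\,\mu\big)$. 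Since $T_\beta(G\cdot\beta)=\mathfrak{q}^{*}$ and the coadjoint isotropy algebra of $\beta$ is $\g_\beta$, one has $\ker L_\beta=\g_\beta$ and $L_\beta$ surjective; moreover $\g_\beta\subseteq\ker L_\mu$ for every $\mu\in\g_\beta^{*}$, by $\Ad(G_\beta)$-invariance of the splitting, and surjectivity is an open condition, so that $\ker L_\mu=\g_\beta$ for all $\mu\in\g_\beta^{*}$ close enough to $\beta$; I fix $W_\beta$ so that this holds throughout $W_\beta$. Then for every $x\in V_\beta$ the composite $d(\nu_{\mathfrak{q}})_x\circ\rho_x=L_{\nu(x)}$ is surjective, hence so is $d(\nu_{\mathfrak{q}})_x$; thus $0$ is a regular value of $\nu_{\mathfrak{q}}$ near $\nu^{-1}(W_\beta)$, so $V_\beta$ is an open subset of the submanifold $\nu_{\mathfrak{q}}^{-1}(0)$, of codimension $\dim\mathfrak{q}$ in $M$, and it is $G_\beta$-invariant since $W_\beta$ is and $\nu$ is $G_\beta$-equivariant. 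This proves (i).

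For (ii) consider $\Phi\colon G\times_{G_\beta}V_\beta\to M$, $[g,x]\mapsto g\cdot x$. Equivariance of $\nu$ gives $\operatorname{Im}\Phi=G\cdot V_\beta=\nu^{-1}(G\cdot W_\beta)$, and if $x$ and $g\cdot x$ both lie in $V_\beta$ then $\nu(x)$ and $g\cdot\nu(x)$ both lie in $W_\beta\subseteq W^{0}$, forcing $g\in G_\beta$ by the slice property; it follows readily that $\Phi$ is injective. That $\Phi$ is an immersion reduces, by equivariance, to injectivity of $d\Phi_{[e,x]}$, that is, to the identity $(\g\cdot x)\cap T_xV_\beta=\g_\beta\cdot x$; and since $T_xV_\beta=\ker d(\nu_{\mathfrak{q}})_x$ while $d(\nu_{\mathfrak{q}})_x\circ\rho_x=L_{\nu(x)}$ has kernel $\g_\beta$, this identity holds. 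Finally $\dim(G\times_{G_\beta}V_\beta)=\dim G-\dim G_\beta+\dim V_\beta=\dim M$, so the injective immersion $\Phi$ is an open embedding, hence a diffeomorphism onto its open $G$-invariant image $G\cdot V_\beta$. This proves (ii).

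For the final assertion I use the moment map identity $d\langle\nu,\xi\rangle=-\,\mathrm{int}(\bm{\xi})\omega$, valid for all $\xi\in\g$: transposing $d\nu_x$ shows that $0$ is a regular value of $\nu$ if and only if, for every $x\in\nu^{-1}(0)$, the linear map $\xi\mapsto\mathrm{int}(\bm{\xi}(x))\,\omega_x\in T_x^{*}M$ is injective. If $\xi\in\g_x=\mathrm{Lie}(G_x)$ then $\bm{\xi}(x)=0$, so $\mathrm{int}(\bm{\xi}(x))\,\omega_x=0$ and hence $\xi=0$; thus $\g_x=0$, every stabiliser along $\nu^{-1}(0)$ is finite, and $G$ acts locally freely there (while $\nu^{-1}(0)$ is a submanifold since $0$ is a regular value). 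I expect the only genuinely non-formal inputs to be the coadjoint local normal form and the rank computation $\ker L_\beta=\g_\beta$; granted these, everything else is bookkeeping with the submersion theorem and with the fact that an injective immersion between manifolds of equal dimension is an open embedding. The point deserving care in a full write-up is the precise meaning of \emph{sufficiently small}: $W_\beta$ must lie inside the tubular-neighbourhood ball $W^{0}$ and keep $L_\mu$ surjective for all $\mu\in W_\beta$, and one should first reduce --- routinely --- to a $G_\beta$-invariant $W_\beta$.
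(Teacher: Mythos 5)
The paper does not prove this proposition; it is quoted verbatim from \cite[Proposition~8.4]{PV14Wit} and used as a black box, so there is no in-text argument to compare against. Your write-up supplies what the citation leaves implicit, and the argument is correct.

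A few remarks on the substance. Your two main ingredients --- the coadjoint slice theorem realizing $\g_\beta^{*}$ as a $G_\beta$-invariant linear complement to $T_\beta(G\cdot\beta)=\mathfrak{q}^{*}=\g_\beta^{\perp}$, and the equivariance identity $d\nu_x\circ\rho_x(\xi)=\ad^{*}(\xi)\nu(x)$, whose $\mathfrak{q}^{*}$-component gives $d(\nu_{\mathfrak{q}})_x\circ\rho_x=L_{\nu(x)}$ --- are exactly the ones that make the statement work, and the openness of surjectivity of $L_\mu$ near $\mu=\beta$ correctly quantifies ``sufficiently small''. Note in particular that no regularity hypothesis on $\nu$ is needed for $\beta\neq 0$: the transversality of $\nu$ to $\g_\beta^{*}$ near $\nu^{-1}(\beta)$ is forced by equivariance alone, since $L_\beta$ is already surjective by pure Lie theory. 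One small point of hygiene: rather than calling $\nu_{\mathfrak q}^{-1}(0)$ a ``submanifold'' outright, it is cleaner to say that $V_\beta$ equals the intersection of the open set $\nu_{\g_\beta}^{-1}(W_\beta)$ with $\nu_{\mathfrak q}^{-1}(0)$, and that $d(\nu_{\mathfrak q})_x$ is surjective at every $x\in V_\beta$, so the submersion theorem applies pointwise; globally $\nu_{\mathfrak q}^{-1}(0)$ need not be smooth. Your verification of the injectivity and immersivity of $\Phi$ via the slice property of $W^{0}$ and the computation $\{\xi:\rho_x(\xi)\in T_xV_\beta\}=\ker L_{\nu(x)}=\g_\beta$ is the standard closing step, and the dimension count makes $\Phi$ an open embedding. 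For the $\beta=0$ assertion, transposing $d\nu_x$ via the moment map identity of Definition~\ref{def:moment map} is indeed the quickest route, and the sign in $\bm{\beta}(x)=\tfrac{d}{dt}\big|_{t=0}e^{-t\beta}\cdot x$ used in that definition is immaterial to the injectivity argument.
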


Whence the \emph{reduced space} $M_0 := \nu^{-1}(0)/G$ is an orbifold, and inherits a natural $G$-equivariant Clifford module structure $E_0$. To keep the exposition simple, let us suppose that the above action is \emph{free}, so that $M_0$ is a manifold. In the noncommutative geometric viewpoint  adopted, this is a very minor technical point and does not make much difference. \\

At this point, Equation (\ref{eqn: localization 1}) of Proposition \ref{prop: localization 1} reads
\begin{equation} \label{eqn: localization 2}
\mathrm{Index}_G [\sigma_{\!\nu}] = \mathrm{Index}_G[\sigma_{\!\nu}|_{U_0}]+ \underset{\beta \in \mathcal{B}}{\sum_{\beta \neq 0}} j^G[\left.\sigma_{\!\nu}^\mathrm{tcl}\right|_{U_\beta}] \otimes_{G\ltimes \Cl_\Gamma(TU_\beta)}[\overline{\partial}^{\mathrm{cl}}_{TU_\beta,\Gamma}] \in \widehat{R}(G) 
\end{equation}  
where $U_0$ is a tubular neighborhood of $\nu^{-1}(0)$ that we choose small enough to be acted on freely by $G$. \\

The first term $\mathrm{Index}_G[\sigma_{\!\nu}|_{U_0}] := j^G[\left.\sigma_{\!\nu}^\mathrm{tcl}\right|_{U_0}] \otimes_{G\ltimes \Cl_\Gamma(TU_0)}[\overline{\partial}^{\mathrm{cl}}_{TU_0,\Gamma}] \in \widehat{R}(G)$ represents the transverse index of the symbol $\sigma_{\!\nu}|_{U_0}$, but since $G$ acts freely on $U_0$, the algebras $G \ltimes C_0(U_0)$ and $C_0(U_0/G)$ are Morita equivalent, so $\sigma_{\!\nu}|_{U_0}$ can be seen as an elliptic symbol on the manifold $U_0/G$. Identifying $U_0/G$ to the normal bundle of $M_0:=\!\nu^{-1}(0)/G$ standard techniques prove that up to the associated direct image morphism\footnote{or simply Bott periodicity in this case, because with $U_0$ may further be identified to $\nu^{-1}(0) \times \mathfrak{g}^*$}, $\mathrm{Index}_G[\sigma_{\!\nu}|_{U_0}]$ is equal to the equivariant index of the Dirac operator $D_0$ on the reduced space $M_0=\nu^{-1}(0)/G$. In other words:

\begin{proposition} Let $[D_0] \in K_0(M_0)$ denote the $K$-homology class of the Dirac operator on the reduced space $M_0$. Denote $E_0 = E|_{M_0}$ the Clifford module bundle on $M_0$ induced by $E$. Then,
\[ \mathrm{Index}_G[\sigma_{\!\nu}|_{U_0}] = \sum_{\pi \in \widehat{G}} \big([E_0 \otimes (\nu^{-1}(0) \times_G V_\pi^*)] \otimes_{C(M_0)}[D_0] \big) \,  V_\pi  \]
where $[E_0 \otimes (\nu^{-1}(0) \times_G V_\pi^*)] \in K^0(M_0)$ is the K-theory class of the bundle $E_0 \otimes (\nu^{-1}(0) \times_G V_\pi)$ over $M_0$, and $(\bm{.} \, \, \,\otimes_{C(M_0)} \, \bm{.})$ is the KK-product over $C(M_0)$ \emph{(}which is in this situation the index of $D_0$ twisted by the vector bundle in question\emph{)}.
\end{proposition}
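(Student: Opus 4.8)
The plan is to start from the already-established identity $\mathrm{Index}_G[\sigma_{\!\nu}|_{U_0}] = j^G[\left.\sigma_{\!\nu}^{\mathrm{tcl}}\right|_{U_0}] \otimes_{G\ltimes \Cl_\Gamma(TU_0)}[\overline{\partial}^{\mathrm{cl}}_{TU_0,\Gamma}]$ (Kasparov's formula applied to $U_0$), and then exploit the fact that $G$ acts freely on $U_0$ to collapse everything down to the reduced manifold $M_0$. First I would record that, since the action on $U_0$ is free, the crossed product $G\ltimes C_0(U_0)$ is Morita equivalent to $C_0(U_0/G)$, and more generally all the orbital $C^*$-algebras ($\Cl_\Gamma$, $\Cl_\tau$, $\Cl_{\Gamma}(TU_0)$, etc.) restricted to a free orbit degenerate in a controlled way: the orbital tangent field $\Gamma$ on $U_0$ is a genuine subbundle of $TU_0$ of rank $\dim G$, namely the vertical bundle of the fibration $U_0\to U_0/G$, so $\Cl_\Gamma(U_0)\rtimes G$ becomes (Morita-)equivalent to a continuous-trace algebra over $U_0/G$, and the transverse index becomes an ordinary equivariant index. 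Concretely, under the Morita equivalence $j^G[\left.\sigma_{\!\nu}^{\mathrm{tcl}}\right|_{U_0}]$ corresponds to the genuine Dolbeault-type symbol class on $U_0/G$, and pairing it with the transverse Dolbeault element reduces to pairing the ordinary symbol class with the ordinary Dolbeault element on $U_0/G$; this is nothing but Kasparov's index theorem in the non-equivariant case, i.e. the Atiyah--Singer formula, so $\mathrm{Index}_G[\sigma_{\!\nu}|_{U_0}]$ equals the index of the elliptic operator on $U_0/G$ whose symbol is (the descent of) $\sigma_{\!\nu}$.

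Next I would identify that elliptic operator. The manifold $U_0/G$ is a tubular neighborhood of $\nu^{-1}(0)/G = M_0$, and since $\nu$ is a submersion near $\nu^{-1}(0)$ the map $U_0/G\to M_0$ is a (vector-bundle, hence homotopy-trivial) fibration whose fiber direction is exactly the $\g^*$-direction in which $\nu$ moves. The key step is then the Thom isomorphism: the symbol $\sigma_{\!\nu}=\i c(\xi+\bm\nu)$ restricted to $U_0/G$ is, up to the bounded perturbation by $\bm\nu$ (which vanishes precisely on $\nu^{-1}(0)$ and grows transversally), the Clifford symbol of the Dirac operator along $M_0$ tensored with the Bott/Thom class along the normal $\nu$-directions. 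Applying the Thom isomorphism $K^0(U_0/G)\cong K^0(M_0)$ collapses the $\nu$-direction and sends $[\sigma_{\!\nu}|_{U_0/G}]$ to $[\sigma_{D_0}]$, the symbol class of the Dirac operator $D_0$ on $M_0$ associated to the induced Clifford module $E_0 = E|_{M_0}$. By naturality of the topological index under the Thom isomorphism (again a consequence of Kasparov's theorem, or just of Atiyah--Singer), $\mathrm{Index}_G[\sigma_{\!\nu}|_{U_0}]$ equals the $G$-equivariant index of $D_0$ acting on $L^2(\nu^{-1}(0), E_0\boxtimes(\text{the }G\text{-bundle}))$ — concretely, decomposing into isotypical components with $L^2(G) = \bigoplus_{\pi} V_\pi \otimes V_\pi^*$, the $\pi$-multiplicity space is $L^2(M_0, E_0\otimes(\nu^{-1}(0)\times_G V_\pi))$, so the $\pi$-component of the index is the index of $D_0$ twisted by the associated bundle $E_0\otimes(\nu^{-1}(0)\times_G V_\pi)$. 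Rewriting that twisted index as the KK-product $[E_0\otimes(\nu^{-1}(0)\times_G V_\pi)]\otimes_{C(M_0)}[D_0]$ gives precisely the claimed formula.

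The main obstacle I anticipate is making the Thom-isomorphism step rigorous in the orbifold/principal-bundle setting and, more subtly, checking that the Clifford module $E|_{U_0}$ really does split (up to homotopy of symbols) as $E_0$ pulled back from $M_0$ tensored with the Clifford module of the normal bundle in the $\g^*$-direction, so that $\sigma_{\!\nu}|_{U_0}$ deformation-retracts onto $\sigma_{D_0}\boxtimes(\text{Bott})$. This uses that $0$ is a regular value (so $\nu$ is a submersion near $\nu^{-1}(0)$, giving the normal bundle a canonical trivialization-up-to-homotopy as the pullback of $\g^*$), together with the fact that the perturbing vector field $\bm\nu$ points in exactly that normal direction — so the homotopy $t\mapsto \i c(\xi + t\bm\nu)$ stays transversally elliptic and realizes the Bott element. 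The freeness assumption on the $G$-action, which we have granted ourselves, is what lets us treat $M_0$ as a manifold and invoke the ordinary Thom isomorphism rather than its orbifold analogue; the general orbifold case would go through verbatim with $C(M_0)$ replaced by the appropriate groupoid algebra, but I would only remark on this rather than carry it out.
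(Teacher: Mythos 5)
Your proposal is correct and follows essentially the same route as the paper: Morita equivalence from the free $G$-action on $U_0$ to descend to $U_0/G$, then a Thom isomorphism collapsing the normal $\g^*$-direction (available because $0$ is a regular value of $\nu$) to land on $M_0$ with the Dirac operator $D_0$, and finally Peter--Weyl to extract the isotypical decomposition. You have in fact spelled out the Thom step more carefully than the paper, which compresses the argument to one sentence; your observation that the homotopy $t\mapsto \i c(\xi+t\bm\nu)$ realizes the Bott element in the normal direction while staying transversally elliptic, and that $E|_{U_0}$ splits accordingly, is exactly the content that the paper leaves implicit.
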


In particular, its $G$-invariant part is the term associated to the trivial representation. In substance, the argument given above is quite similar to Paradan-Vergne's; see \cite[Subsection 8.2]{PV14Wit}, which also provides additional details on this well-known procedure (see also \cite{BV96} for an older reference). 

\vspace{2mm}

\subsection{Normal bundles and complex structures} \label{sec: normal}

To deal with the terms of (\ref{eqn: localization 2}) depending on non-zero $\beta \in \mathcal{B} \subset \g^*$, we shall descend to the fixed point sets $M^{\beta}$ by a slightly more elaborated Thom isomorphism. We will now describe complex structures on the normal bundles $N_\beta \to M_\beta$ in $U_\beta = G \times_{G_\beta} V_\beta$ allowing us to do so. Note that by the tubular neighborhood theorem, we can identify $U_\beta$ to the total space of the normal bundle $N_\beta$. \\

Consider a non-zero element $\beta \in \mathcal{B} \subset \mathfrak{g}^*$. Because of the decomposition 
\[ TM|_{M^\beta} = TM^\beta \oplus N_\beta, \]
the infinitesimal action $\mathcal{L}_{\beta}$ on vector fields on $M$ induces a fiberwise linear isomorphism $\mathcal{L}_{\beta} \in \End(N_\beta)$. It is skew-symmetric (with respect to the metric $g$), hence diagonalizable with "imaginary positive" eigenvalues. We can therefore make the following definition:
%\[ \mathcal{L}_{\beta}(v)=\left. \dfrac{d}{dt} \right\vert_{t=0} e^{t\beta} \cdot v .\]

\begin{definition} \label{def:complex structure 1}
We define the complex structure $J_\beta$ on $N_\beta$ as the operator $J_\beta = \mathcal{L}_{\beta} |\mathcal{L}_{\beta}|^{-1}$, where $|\mathcal{L}_{\beta}|^{-1} = (-\mathcal{L}_{\beta}^2)^{1/2}$.     
\end{definition}

A similar construction applies to $Y_\beta := (V_\beta)^{\beta} = M^\beta \cap V_\beta$ in $M^\beta$ (which is open in $M^\beta$), so the normal bundle $\eta_\beta$ relative to the embedding $\iota_{Y_\beta, V_\beta} : Y_\beta \hookrightarrow V_\beta$ comes equipped with a complex structure. \\ 

We will also need another construction of the same type the quotient space $\g/\g_{\beta}$. Recall that $\g$ and $\g^*$ are identified with a $G$-invariant inner product. Via this identification, the endomorphism of $\g$ induced by the adjoint action of $\beta$ yields a linear skew-symmetric isomorphism on $\g/\g_{\beta}$. Hence we can make the following definition.

\begin{definition} \label{def:complex structure 2}
The linear map $\mathfrak{J}_\beta = \beta |\beta|^{-1}$ provides a complex structure on the vector space $\g/\g_{\beta}$. 
\end{definition}

\vspace{2mm}

\subsection{KK-theory and the non-abelian localization formula}
We are now ready to re-interpret Paradan--Vergne's calculation of $\mathrm{Index}_G [\sigma_{\!\nu}]$ in KK-theory. We shall decompose the terms 
\[ j^G[\left.\sigma_{\!\nu}^\mathrm{tcl}\right|_{U_\beta}] \otimes_{G\ltimes \Cl_\Gamma(TU_\beta)}[\overline{\partial}^{\mathrm{cl}}_{TU_\beta,\Gamma}] \]
with non-zero elements $\beta \in \mathcal{B} \subset g^{*}$ in Equation (\ref{eqn: localization 2}), appealing to  appropriate Thom isomorphisms. \\

Recall that $U_\beta := G \cdot V_\beta$ is the slice diffeomorphic to $G \times_{G_\beta} V_\beta$, with $V_\beta$ being the $G_\beta$-invariant submanifold $\nu^{-1}(W_\beta)$ in $M$ obtained from a small neighborhood $W_\beta$ of $\beta$ in the infinitesimal stabilizer $\g_{\beta}$. In addition, consider the fixed point sets $M^\beta = \{x \in M \, ; \, \bm{\beta}(x)=0 \}$ and $Y_\beta = M^\beta \cap V_\beta$, which is a open neighborhood of $M^\beta \cap \nu^{-1}(\beta)$ in $M^\beta$.

\begin{proposition}
For a non-zero $\beta \in \mathcal{B} \subset \mathfrak{g}^*$, let $G_\beta \subset G$ be the stabilizer subgroup of $\beta$ for the coadjoint action of $G$ on $\mathfrak{g}^*$. Then, $j^G[\left.\sigma_{\!\nu}^\mathrm{tcl}\right|_{U_\beta}] \otimes_{G\ltimes \Cl_\Gamma(TU_\beta)}[\overline{\partial}^{\mathrm{cl}}_{TU_\beta,\Gamma}]$ is the transverse index of the symbol
\[ \mathrm{Ind}^G_{G_\beta} \dfrac{[ \Lambda^{\bullet}_\bC (\mathfrak{g}/\g_\beta)] \boxtimes [\left.\sigma_{\!\nu}^\mathrm{tcl}\right|_{Y_\beta}]}{\sum_k (-1)^k [ \Lambda^{k} N_\beta ]}  \in K_0\big(\Cl_{\Gamma}(T(G \times_{G_\beta} Y_\beta))\big) = K_0(\Cl_{\Gamma}(TU_\beta)),\]
where $\boxtimes$ denotes the external product ; $[\left.\sigma_{\!\nu}^\mathrm{tcl}\right|_{Y_\beta}] \in K_0(\Cl_{\Gamma}(TY_\beta))$ is the symbol class associated to the restriction of $\sigma_{\!\nu}$ to $Y_\beta$ ; $\mathrm{Ind}^G_{G_\beta}$ denotes Atiyah's induction functor, and $N_\beta$ is the normal bundle of $M^{\beta}$ in $M$.
\end{proposition}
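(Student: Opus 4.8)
The plan is to peel the slice $U_\beta = G\times_{G_\beta}V_\beta$ apart in two stages, each governed by a Thom-type isomorphism, and to match the resulting classes against the right-hand side. First I would use the induction isomorphism for crossed products: since $U_\beta\cong G\times_{G_\beta}V_\beta$, there is a canonical identification $G\ltimes\Cl_\Gamma(TU_\beta)\simeq G_\beta\ltimes\Cl_\Gamma(TV_\beta)$ (Morita equivalence of $G\ltimes C_0(G\times_{G_\beta}Z)$ with $G_\beta\ltimes C_0(Z)$, tensored over $C_0(M)$ with the orbital Clifford algebra), compatible with the transverse Dolbeault elements by functoriality. Under this identification $j^G[\sigma_{\!\nu}^{\mathrm{tcl}}|_{U_\beta}]\otimes[\overline\partial^{\mathrm{cl}}_{TU_\beta,\Gamma}]$ becomes the $G_\beta$-transverse index of $[\sigma_{\!\nu}^{\mathrm{tcl}}|_{V_\beta}]$, which is exactly what Atiyah's induction functor $\mathrm{Ind}^G_{G_\beta}$ records. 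However, one must be careful: in passing from $G$ to $G_\beta$ the orbital field $\Gamma$ for the $G$-action on $U_\beta$ restricts not to the orbital field for the $G_\beta$-action on $V_\beta$ but to that field enlarged by $\g/\g_\beta$ (the directions along which $G$ moves off the slice). This is precisely where the factor $[\Lambda^\bullet_\bC(\g/\g_\beta)]$ enters — it is the orbital Bott/Thom class for that extra $\g/\g_\beta$-summand, with the complex structure $\mathfrak{J}_\beta$ of Definition \ref{def:complex structure 2}, and the external product $\boxtimes$ records that $\g/\g_\beta$ is a trivial bundle over $V_\beta$.

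Second I would carry out the radial deformation of $\sigma_{\!\nu}$ along the normal bundle $N_\beta\to M^\beta$ inside $V_\beta$ (equivalently inside $U_\beta$ via the tubular neighborhood theorem). Because $\bm\nu$ restricted near $M^\beta\cap\nu^{-1}(\beta)$ is, to first order, the radial vector field generated by the infinitesimal action $\mathcal{L}_\beta$ on $N_\beta$, the symbol $\sigma_{\!\nu}$ on $V_\beta$ is homotopic (through transversally elliptic symbols, using that $|\bm\nu|=1$ outside the relevant compact set) to the product of the Bott class of $N_\beta$ — built from the complex structure $J_\beta=\mathcal{L}_\beta|\mathcal{L}_\beta|^{-1}$ of Definition \ref{def:complex structure 1} — with $\sigma_{\!\nu}|_{Y_\beta}$ on the fixed-point slice $Y_\beta=M^\beta\cap V_\beta$. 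The Thom isomorphism $K_0(\Cl_\Gamma(TY_\beta))\xrightarrow{\sim} K_0(\Cl_\Gamma(TV_\beta))$ for the complex bundle $(N_\beta,J_\beta)$ is inverted by division by the equivariant Euler class $\sum_k(-1)^k[\Lambda^k N_\beta]$; this is the standard bookkeeping of the localization-to-fixed-points Thom isomorphism, and it accounts for the denominator in the displayed formula. Putting the two stages together — induction through $G/G_\beta$ with its $[\Lambda^\bullet_\bC(\g/\g_\beta)]$ correction, then descent to $M^\beta$ via the $N_\beta$-Thom isomorphism with its Euler-class denominator — yields exactly $\mathrm{Ind}^G_{G_\beta}\big([\Lambda^\bullet_\bC(\g/\g_\beta)]\boxtimes[\sigma_{\!\nu}^{\mathrm{tcl}}|_{Y_\beta}]\big)/\sum_k(-1)^k[\Lambda^k N_\beta]$ as the class in $K_0(\Cl_\Gamma(TU_\beta))$ whose transverse index is the term in question.

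Throughout, the conceptual engine is that all the maps involved — extension by zero along open embeddings, the induction Morita equivalence, the fiberwise Bott/Thom classes — are natural with respect to the KK-product and commute with formation of the transverse Dolbeault element $[\overline\partial^{\mathrm{cl}}_{-,\Gamma}]$ and the descent map $j^G$; so the equality of transverse indices follows from the equality of the underlying symbol classes in $K_0(\Cl_\Gamma(TU_\beta))$, which is what I would actually prove. The one genuine subtlety, and the step I expect to be the main obstacle, is the first one: verifying that the orbital Clifford algebra $\Cl_\Gamma$ restricts correctly under $G\leadsto G_\beta$, i.e. that the $G$-orbital directions on $U_\beta$ split $G_\beta$-equivariantly as (the $G_\beta$-orbital directions on $V_\beta$) $\oplus$ ($\g/\g_\beta$ acting through $\mathcal{L}_\beta$), with the right complex structure and with the continuous-field structures matching. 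This must be checked uniformly near the possibly-singular set $Z_{\!\bm\nu}\cap U_\beta$, where orbit dimensions can jump; the compactness of $Z_{\!\bm\nu}$ and the slice description of Proposition \ref{thm:FundClass}'s ambient setup make it tractable, but it is the place where one pays, in Kasparov's framework, for what Paradan--Vergne handle by their more technical direct construction. Once that identification is in place, the radial homotopy and the two Thom isomorphisms are routine, and the formula drops out.
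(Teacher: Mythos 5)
Your first stage (induction through the slice, with the orbital field over $V_\beta$ enlarging to $\Gamma_\beta\oplus\g/\g_\beta$, giving the $[\Lambda^\bullet_\bC(\g/\g_\beta)]$ factor) is conceptually aligned with what the paper does, though the paper is more explicit: it factors the map as Bott periodicity in the $\g/\g_\beta$-direction, followed by Kasparov's induction functor $\mathrm{Ind}^G_{G_\beta}$ landing in $KK^G_0(C(G/G_\beta),\Cl_\Gamma(TU_\beta))$, followed by KK-product with the class $[\bm 0]\in K^0(T^*_G(G/G_\beta))$ of the zero operator on $G/G_\beta$ — that last pairing is the step you gesture at by ``compatible with the transverse Dolbeault elements by functoriality,'' and it should be spelled out since it is what kills the left $C(G/G_\beta)$-leg.

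Your second stage, however, contains a genuine error, and it is precisely the error the paper's approach is designed to avoid. You claim that $\sigma_{\!\nu}|_{V_\beta}$ is homotopic to the product of the Bott/Thom class of $(N_\beta,J_\beta)$ with $\sigma_{\!\nu}|_{Y_\beta}$. If that were true, you would have $[\sigma_{\!\nu}^{\mathrm{tcl}}|_{V_\beta}]=\Phi\bigl([\sigma_{\!\nu}^{\mathrm{tcl}}|_{Y_\beta}]\bigr)$ for the Thom isomorphism $\Phi$, and applying $\Phi^{-1}$ would simply return $[\sigma_{\!\nu}^{\mathrm{tcl}}|_{Y_\beta}]$ with no Euler-class denominator at all — your own conclusion would contradict your premise. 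That the claim is false can also be seen by comparing with Atiyah's computation for toral actions on $\bC^n$: near a fixed point the transverse index of $\sigma_{\!\nu}$ is a formal geometric series (the $1/(1-t)$ phenomenon), not the index $1$ of a Bott class, which is exactly why the Euler class appears in the denominator. The paper deliberately makes \emph{no} claim about the local shape of $\sigma_{\!\nu}$ near $M^\beta$. Instead it constructs the Thom element $[\mathcal{T}_\beta]$ giving the KK-equivalence $\Cl_{\Gamma_\beta}(TY_\beta)\sim\Cl_{\Gamma_\beta}(TV_\beta)$ (Lemma \ref{lem: KKeq 1}), and invokes the Atiyah--Singer identity $[\mathcal{T}_\beta]\otimes_{\Cl_\Gamma(TV_\beta)}\iota_{Y_\beta,V_\beta}^*=\sum_k(-1)^k[\Lambda^k\eta_\beta]$, which identifies $[\sigma_{\!\nu}^{\mathrm{tcl}}|_{V_\beta}]\otimes[\mathcal{T}_\beta]^{-1}$ with the formal quotient $[\sigma_{\!\nu}^{\mathrm{tcl}}|_{Y_\beta}]/\sum_k(-1)^k[\Lambda^k\eta_\beta]$ without ever needing to homotope the symbol. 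The final passage from $\eta_\beta$ to $N_\beta$ uses $N_\beta=\eta_\beta\oplus(\g/\g_\beta\times Y_\beta)$, a step you omit. In short: the radial-homotopy shortcut is what Paradan--Vergne effectively take (which forces them to import Atiyah's nontrivial $\bC^n$ computation), and replacing it with the purely formal Thom/Euler bookkeeping is the main conceptual simplification the paper is advertising.
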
 

\begin{proof}
Let $\Gamma_\beta$ denote the orbital field in $V_\beta$ generated by the action of $G_\beta$. By Bott periodicity, we have an isomorphism of K-theory groups:
\begin{multline*}
K_0^{G_\beta}(\Cl_{\Gamma_\beta}(TV_{\beta})) \longrightarrow K_0^{G_\beta} \big(\Cl_{\Gamma_\beta}(TV_{\beta}) \otimes C_0(\g/\g_\beta) \otimes \mathrm{Cliff}(\g/\g_\beta)\big) \\ = K_0^{G_\beta} \big( C_0(TV_\beta \times \g/\g_\beta) \otimes_{C_0(V_\beta)} \Cl_{\Gamma_\beta \oplus \g/\g_\beta}(V_{\beta})\big)
\end{multline*}
provided as usual by external multiplication with $[\Lambda^{\bullet} (\g/\g_\beta)] \in K_0^{G_\beta}\big(C_0(\g/\g_\beta) \otimes \mathrm{Cliff}(\g/\g_\beta)\big)$, where $\Lambda^{\bullet} (\g/\g_\beta)$ is the exterior algebra of the quotient space $\g/\g_\beta$, which has a natural complex structure (cf. Definition \ref{def:complex structure 2}). \\

Next, consider Kasparov's induction functor $\mathrm{Ind}^{G}_{G_\beta}$: 
\[ K_0^{G_\beta} \big( C_0(TV_\beta \times \g/\g_\beta) \otimes_{C_0(V_\beta)} \Cl_{\Gamma_\beta \oplus \g/\g_\beta}(V_{\beta})\big) \longrightarrow KK_0^{G} \big(C(G/G_\beta),  \Cl_{\Gamma}(TU_\beta) \big) \]

%where we obtain the last KK-group by noticing the equality
%\[ C_0(G \times_{G_\beta} (TV_\beta \times \g/\g_\beta)) \otimes_{C_0(G \times_{G_\beta} V_\beta)} \Cl_{\Gamma}(G \times_{G_\beta}V_{\beta}) = \Cl_{\Gamma}(TU_\beta) \] 
where the second entry in the right-hand side is obtained from the fact that the tangent bundle of the slice $U_\beta$ is given by $T(G \times_{G_\beta} V_\beta) = G \times_{G_\beta} (TV_\beta \times \g/\g_\beta)$. Then, observe that $G/G_\beta = T_G(G/G_\beta)$, and recall that the zero operator on $G/G_\beta$ is $G$-transversally elliptic. The composition of the two homomorphisms above with the (right) KK-product by $[\bm{0}] \in K^0(T_G^{*}(G/G_\beta))$ (that we shall absorb in the induction morphism to avoid heavier notations) therefore yields a homomorphism:
\[ K_0^{G_\beta}(\Cl_{\Gamma_\beta}(TV_{\beta})) \longrightarrow K_0^{G}(\Cl_{\Gamma}(TU_{\beta})) \]
This construction was already considered by Atiyah,  \cite[Section 4]{AtiTransEll}, who also proves that it is an isomorphism (this can be seen directly via the construction and properties of Kasparov's induction functor). Following Atiyah, we still denote this homomorphism $\mathrm{Ind}^{G}_{G_\beta}$ (this is the one considered in the statement of the proposition).  \\

On the other hand, $[\left. \sigma_{\!\nu}^{\mathrm{tcl}} \right|_{U_\beta}] \in K_0^G(\Cl_{\Gamma}(TU_\beta))$ is the image of $[\left. \sigma_{\!\nu}^{\mathrm{tcl}} \right|_{V_\beta}] \in K_0^{G_\beta}(\Cl_{\Gamma}(TV_\beta))$ by this isomorphism, i.e.
\[ [\left. \sigma_{\!\nu}^{\mathrm{tcl}} \right|_{U_\beta}] = \mathrm{Ind}^{G}_{G_\beta} \big([\Lambda^{\bullet} (\g/\g_{\beta})] \boxtimes [\left. \sigma_{\!\nu}^{\mathrm{tcl}} \right|_{V_\beta}] \big)\]  
In addition, the symbol class $[\left. \sigma_{\!\nu}^{\mathrm{tcl}} \right|_{V_\beta}] \in K_0^{G_\beta}(\Cl_{\Gamma}(TV_\beta))$ can be decomposed further by descending to the fixed point set $M^\beta$. To this end, let 
\[ Y_\beta := (V_\beta)^{\beta} = \{ x \in V_\beta \, ; \, \bm{\beta}(x)=0 \} = M^\beta \cap V_\beta \]
which is open in $M^\beta$. Let $\eta_\beta$ be the normal bundle relative to the embedding $\iota_{Y_\beta, V_\beta} : Y_\beta \hookrightarrow V_\beta$ and identify $V_\beta$ with its total space. We endow $\eta_\beta$ with the complex structure described in Section \ref{sec: normal}.

\begin{lemma} \label{lem: KKeq 1} One has a KK-equivalence $\Cl_{\Gamma_\beta}(TY_{\beta}) \sim \Cl_{\Gamma_\beta}(TV_{\beta})$ provided by a "Thom" element:
\[ [\mathcal{T}_\beta] := [T_{\beta}] \otimes 1_{\Cl_{\Gamma_\beta}(Y_\beta)} \in \KK^{G_\beta}\big(\Cl_{\Gamma_\beta}(TY_{\beta}), \Cl_{\Gamma_\beta}(TV_{\beta})\big) \]
where $[T_{\beta}] \in \KK^{G_\beta}\big(C_0(TY_\beta), C_0(TV_\beta)\big)$ is the standard Thom element that implements the KK-equivalence $C_0(TY_\beta) \sim  C_0(TV_\beta)$. %constructed by an appropriate family of Bott elements acting fiberwise on the complex vector bundle $TV_\beta \to TY_\beta. Its inverse is built from a family of  Dirac operators acting fiberwise on the same bundle. Hence, we can write:
\end{lemma}

\begin{proof} (of Lemma \ref{lem: KKeq 1})
%First, recall that by definition, we have 
%\[ \Cl_{\Gamma_\beta}(TV^{\beta}) = C_0(TV_\beta) \otimes_{C_0(V_\beta)} \Cl_{\Gamma_\beta}(V_\beta) \]
This is simply a consequence of the local decomposition 
\[ \Cl_{\Gamma_\beta}(V_\beta) = C_0(V_\beta) \otimes_{C_0(Y_\beta)} \Cl_{\Gamma_\beta}(Y_\beta) \]
%Since $V_\beta$ is identified to the total space of the complex vector bundle $N_\beta$ over $Y_\beta$, we have a KK-equivalence $C_0(V_\beta) \sim C_0(Y_\beta)$, whence a KK-equivalence $\Cl_{\Gamma_\beta}(V_\beta) \sim \Cl_{\Gamma_\beta}(Y_\beta)$. 
combined with the usual Thom isomorphism $C_0(V_\beta) \sim C_0(Y_\beta)$. % (recall $V_\beta$ identifies with the total space $\mathtt{N_\beta} \to Y_\beta$.  
\end{proof}

Tensoring both sides of the KK-equivalence in the lemma by $C_0(\g/\g_\beta) \otimes \mathrm{Cliff}(\g/\g_\beta)$ and combining with the aformentioned Bott periodicity isomorphism yields another a KK-equivalence
\[  C_0(TY_\beta \times \g/\g_\beta) \otimes_{C_0(Y_\beta)} \Cl_{\Gamma_\beta \oplus \g/\g_\beta}(Y_{\beta}) \sim C_0(TV_\beta \times \g/\g_\beta) \otimes_{C_0(V_\beta)} \Cl_{\Gamma_\beta \oplus \g/\g_\beta}(V_{\beta}).\] 
implemented by a Thom element 
\[ [\bm{\mathcal{T}}_{\!\!\beta}] \in \KK^{G_\beta}\big(C_0(TY_\beta \times \g/\g_\beta) \otimes_{C_0(Y_\beta)} \Cl_{\Gamma_\beta \oplus \g/\g_\beta}(Y_{\beta}), C_0(TV_\beta \times \g/\g_\beta) \otimes_{C_0(V_\beta)} \Cl_{\Gamma_\beta \oplus \g/\g_\beta}(V_{\beta}) \big). \]
Since Kasparov's induction functor preserves KK-equivalences, we have
\[ \Cl_{\Gamma}(T(G \times_{G_\beta} Y_\beta)) \sim \Cl_{\Gamma}(TU_\beta)\] 
induced by $\mathrm{Ind}_{G_\beta}^{G}[\bm{\mathcal{T}}_{\!\!\beta}] \in \KK^{G}(\Cl_{\Gamma}(T(G \times_{G_\beta} Y_\beta), \Cl_{\Gamma}(TU_\beta))$, which is also a Thom element (this can be seen via the properties of the induction functor). \\ 

After these preparations, we may now write (we remove the  algebra in subscript of the KK-products to alleviate the notations):
\[ j^G[\left.\sigma_{\!\nu}^\mathrm{tcl}\right|_{U_\beta}] \otimes [\overline{\partial}^{\mathrm{cl}}_{TU_\beta,\Gamma}] = j^G \, \mathrm{Ind}^{G}_{G_\beta} \big([\Lambda^{\bullet} (\g/\g_{\beta})] \boxtimes ([\left. \sigma_{\!\nu}^{\mathrm{tcl}} \right|_{V_\beta}]) \otimes [\mathcal{T}_\beta]^{-1})\big) \otimes  \underbrace{j^G \mathrm{Ind}_{G_\beta}^{G}[\bm{\mathcal{T}}_{\!\!\beta}]  \otimes [\overline{\partial}^{\mathrm{cl}}_{TU_\beta,\Gamma}]}_{
 \let\footnotesize
    \substack{\text{Dolbeault element }  [\overline{\partial}^{\mathrm{cl}}_{T(G \times_{G_\beta} Y_\beta),\Gamma}]}} \]
The equality highlighted in the bracket comes from a classical argument in KK-theory, see e.g \cite[Paragraph 5.4 (in the proof of the theorem)]{Kas88}. \\

To finish, the embedding $\iota_{Y_\beta, V_\beta}$ induces the homomorphism $\iota_{Y_\beta, V_\beta}^{*} : K_0(\Cl_{\Gamma}(TV_\beta)) \to K_0(\Cl_{\Gamma}(TY_\beta))$, and a straightforward adaptation of a classical formula of Atiyah--Singer gives
\[ [\mathcal{T}_\beta] \otimes_{\Cl_{\Gamma}(TV_\beta)} \iota_{Y_\beta, V_\beta}^{*}   = \textstyle \sum_k (-1)^k [ \Lambda^{k} \eta_\beta ] \]
Hence, $j^G[\left.\sigma_{\!\nu}^\mathrm{tcl}\right|_{U_\beta}] \otimes [\overline{\partial}^{\mathrm{cl}}_{TU_\beta,\Gamma}]$ is the transverse index of the symbol 
\[\mathrm{Ind}^G_{G_\beta} \dfrac{[ \Lambda^{\bullet} (\g/\g_\beta)] \boxtimes [\sigma|_{Y_\beta}]}{\sum_k (-1)^k [ \Lambda^{k} \eta_\beta ]} \in K_0\big(\Cl_{\Gamma}(T(G \times_{G_\beta} Y_\beta)\big). \]
Noticing that the normal bundle $N_\beta$ relative to the embedding $Y_\beta \hookrightarrow U_\beta$ decomposes as $N_\beta = \eta_\beta \oplus (\g/\g_{\beta} \times Y_\beta)$ finishes the proof.
\end{proof}

As a conclusion, we obtain the non-abelian localization formula of Paradan and Vergne:

\begin{theorem} \label{thm:main}
Let $(M,g)$ be a complete Riemannian manifold equipped with the action of a compact Lie group $G$, let $E$ be a $G$-equivariant Clifford module bundle over $M$ and let $\!\nu : M \to \g^{*}$ be a moment map in the sense of \emph{Definition \ref{def:moment map}}. According to the  decomposition of the zero set 
\[ \Zv = \bigsqcup_{\beta \in \mathcal{B}} G \cdot \big(M^{\beta} \cap \nu^{-1}(\beta) \big) \]
of the vector field $\!\bm{\nu}$ associated to $\!\nu$, where $\mathcal{B} \subset \g^*$ is a finite set consisting of representatives of the coadjoint orbits within $\!\nu(Z_{\!\bm{\nu}})$, the transverse index of the symbol $\sigma_{\!\nu}(x,\xi) = \i c(\xi + \!\bm{\nu}) \in C^{\infty}(T^*M, \pi_{T^*M}^*\End(E))$
is given by the fixed point formula:
\[ \mathrm{Index}_G [\sigma_{\!\nu}] = \sum_{\pi \in \widehat{G}} \big([E_0 \otimes (\nu^{-1}(0) \times_G V_\pi^*)] \otimes_{C(M_0)}[D_0] \big) \,  V_\pi + 
\underset{\beta \in \mathcal{B}}{\sum_{\beta \neq 0}}  \,\mathrm{Index}_{G_\beta} \dfrac{[ \Lambda^{\bullet}_\bC (\mathfrak{g}/\g_\beta)] \boxtimes [\left.\sigma_{\!\nu}\right|_{Y_\beta}]}{\sum_k (-1)^k [ \Lambda^{k} N_\beta ]} \]
where $E_0$ is the Clifford module bundle over the reduced space $M_0=\!\nu^{-1}(0)/G$ induced by $E$ ; $[E_0 \otimes (\nu^{-1}(0) \times_G V_\pi)]$ is the K-theory class of the vector bundle $E_0 \otimes (\nu^{-1}(0) \times_G V_\pi) \to M_0$ ; $G_\beta \subset G$ denotes the stabilizer subgroup of $\beta \in \mathcal{B}$ relative to the coadjoint action of $G$ on $\mathfrak{g}^*$ ; $\g_\beta = \mathrm{Lie}(G_\beta)$ ; $Y_\beta$ is a small open neighborhood of $M^\beta \cap \nu^{-1}(\beta)$ inside the fixed point set $M^\beta$ ; $[\sigma_{\!\nu}] \in K^0_G(T_G^*M)$; $[\left.\sigma_{\!\nu}\right|_{Y_\beta}] \in K^0_{G_\beta}(T_{G_\beta}Y_\beta)$ denote respectively the symbol classes of $\sigma_{\!\nu}$ and of its restriction to $T^*Y_\beta$; and $N_\beta$ is the normal bundle of $M^{\beta}$ in $M$.
\end{theorem}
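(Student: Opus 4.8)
The proof is essentially an assembly step: the hard analytic and KK-theoretic work has been distributed across Proposition \ref{prop: localization 1}, the splitting into slices (Equation \eqref{eqn: localization 2}), the identification of the $\beta=0$ term as the index on the reduced space (the Proposition preceding Section \ref{sec: normal}), and the slice-by-slice decomposition of the $\beta \neq 0$ terms (the Proposition proved just above). So the plan is to simply collect these. First I would invoke Proposition \ref{prop: localization 1} to write $\mathrm{Index}_G[\sigma_{\!\nu}]$ as a KK-product localized on a relatively compact neighborhood $U$ of $\Zv$, then use the coadjoint-orbit decomposition of $\Zv$ together with additivity/excision (the open cover of $U$ by the slices $U_\beta$ around the pieces $G \cdot (M^\beta \cap \nu^{-1}(\beta))$, plus $U_0$ around $\nu^{-1}(0)$) to break the index into the sum displayed in Equation \eqref{eqn: localization 2}.

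Next I would substitute the two already-proved identifications of the summands. For the $\beta = 0$ term, the Morita equivalence $G \ltimes C_0(U_0) \sim C_0(U_0/G)$ (valid since the action on $U_0$ is free by shrinking) together with the Thom isomorphism for the principal $G$-bundle $U_0/G \to M_0$ turns $\mathrm{Index}_G[\sigma_{\!\nu}|_{U_0}]$ into the twisted index of the Dirac operator $D_0$ on $M_0$, which by Peter--Weyl unpacks isotypically into $\sum_{\pi \in \widehat{G}} ([E_0 \otimes (\nu^{-1}(0)\times_G V_\pi)] \otimes_{C(M_0)} [D_0])\, V_\pi$ — this is exactly the statement of the Proposition before Section \ref{sec: normal}. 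For each nonzero $\beta$, the Proposition just proved identifies $j^G[\sigma_{\!\nu}^{\mathrm{tcl}}|_{U_\beta}] \otimes [\overline{\partial}^{\mathrm{cl}}_{TU_\beta,\Gamma}]$ with the $G_\beta$-transverse index of $\mathrm{Ind}^G_{G_\beta}\big([\Lambda^\bullet_\bC(\g/\g_\beta)] \boxtimes [\sigma_{\!\nu}^{\mathrm{tcl}}|_{Y_\beta}]\big) / \sum_k (-1)^k[\Lambda^k N_\beta]$, using $N_\beta = \eta_\beta \oplus (\g/\g_\beta \times Y_\beta)$. Plugging both into Equation \eqref{eqn: localization 2} gives the formula verbatim.

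The only genuine loose ends are bookkeeping: (i) checking that the finitely many slices $U_\beta$ ($\beta \in \mathcal{B}$) together with $U_0$ actually form a cover of a relatively compact neighborhood of $\Zv$ and that the overlaps do not contribute — this follows from compactness of $\Zv$, the disjointness of the orbits $G\cdot(M^\beta \cap \nu^{-1}(\beta))$ for distinct $\beta \in \mathcal{B}$, and additivity of the transverse index under disjoint-support decompositions of the symbol (equivalently, the excision property already used to prove Proposition \ref{prop: localization 1}); and (ii) matching the Clifford-module notation, i.e. passing from the ``$\mathrm{tcl}$''-decorated symbol classes in $K_0(\Cl_\Gamma(TU_\beta))$ back to the naive Atiyah classes $[\sigma_{\!\nu}|_{Y_\beta}] \in K^0_{G_\beta}(T_{G_\beta}Y_\beta)$ appearing in the statement, via the forgetful $\ast$-homomorphism $\S_\Gamma \to C_0(T_G^*-)$ and Theorem \ref{thm:KaspInd} which guarantees the two indices agree.

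The main obstacle — really the only nontrivial point in the assembly — is the additivity/excision argument in (i): one must be careful that the slices $U_\beta$ can be chosen pairwise disjoint (or with overlaps avoiding $\Zv$) so that the symbol $\sigma_{\!\nu}$, which is invertible away from $\Zv$, splits as an orthogonal sum of its restrictions to the $U_\beta$ up to an invertible (hence index-zero) piece. This is exactly where Paradan--Vergne invest technical effort, but in the present framework it reduces to the already-established functoriality of the tangential Clifford symbol classes and transverse Dolbeault elements under open embeddings, together with the fact (implicit in Proposition \ref{prop: localization 1} and Section \ref{sec:noncompact}) that $[\sigma_{\!\nu,\mathrm{c}}] = \sum_\beta (\iota_{U_\beta,M})_* [\sigma_{\!\nu,\mathrm{c}}|_{U_\beta}]$ in $K_0(\S_\Gamma(M))$. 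Granting that, the theorem follows by substitution and no further computation is needed.
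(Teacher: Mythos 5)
Your proposal is correct and follows the paper's own route: the theorem is obtained by assembling Proposition \ref{prop: localization 1} (localization to a neighborhood $U$ of $\Zv$), Equation \eqref{eqn: localization 2} (splitting over the slices $U_\beta$), the Morita/Thom identification of the $\beta = 0$ term, and the Thom-isomorphism decomposition of the $\beta \neq 0$ terms proved in the preceding Proposition. Your emphasis on the passage from \eqref{eqn: localization 1} to \eqref{eqn: localization 2} — via choosing pairwise-disjoint slices around the disjoint compact pieces $G\cdot(M^\beta\cap\nu^{-1}(\beta))$ and additivity of pushforwards of compactly supported symbol classes — is a fair identification of the one verification the paper states without comment, and your sketch of it (together with the remark on forgetting the $\mathrm{tcl}$ decoration back to Atiyah's naive classes via Theorem \ref{thm:KaspInd}) is sound.
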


The quantization commutes with reduction is subsequently proved without much difficulty with a separate calculation by hand, showing that only the term associated to the trivial representation in the first sum contributes when extracting the $G$-invariant part, cf. \cite{PV14Wit}. \\

\section{Concluding comments}

\subsection{Comparison to Paradan--Vergne's work} Paradan--Vergne's proof of Theorem \ref{thm:main} is actually quite KK-theoretical in spirit, but relies the K-theoretical machinery of Atiyah--Singer \cite{AtiSinI}. Let us just point out mention some improvements our approach brings. \\

Applying this theorem to the case of a toral action of $M=\bC^n$ with moment map $\nu(z) = (|z_1|^2/2, \ldots, |z_n|^2/2)$, one recovers directly the results of Atiyah \cite[Theorem 6.6] {AtiTransEll}, in contrast to Paradan--Vergne's work which uses this result (which is quite non-trivial!) as an intermediary step in the construction of their Thom isomorphism (that they refer to as push-forward). As a byproduct, we can avoid the technical constructions from \cite[Sections 4 and 6.3]{PV14Wit} utilized to this end, which are meant to overcome the non-availibility of the KK-product in Atiyah--Singer's framework. \\

By way of comparison, a similar phenomenon already appears when analyzing back to back the original proof of the classical theorem \cite{AtiSinI} via the `index axioms' (and notably the one on multiplicativity) and the KK-theoretical proof: the former suffers from complications which are subsequently removed after the introduction of the KK-product.  %For instance, in the former, the main source of difficulty is the verification by hand that the analytic index verifies the multiplicativity axiom. This is 

\vspace{2mm}

\subsection{Comparison with the analytic approach}
In \cite{LRS19}, it is shown that the class $j^G [\!\nu] \otimes [D]$ mentioned briefly in Section \ref{sec:moment map} corresponds to the K-homology class $[D_{f \!\nu}] \in K^0(C^*(G))$ of a deformed Dirac operator of the form
\[ D_{f \!\nu} = D + \i f c(\!\bm{\nu}) \]
where $f$ is a smooth $G$-invariant positive function satisfying a certain growth at infinity. \\

The crux of Ma--Tian--Zhang's solutions \cite{TZ98, MZ14} to  quantization commutes with reduction is that the localization the index of $D_{f \!\nu}$ around the zero set $\!\nu^{-1}(0)$ (and not only $\Zv$ as in Proposition \ref{prop: localization 1}) can be performed directly at the analytic level, at the cost of very technical estimates on the square of $D_{f \!\nu}$. \\

Hochs--Song's methods \cite{HS17} blend the analytic approach within Paradan--Vergne's framework: the idea is also to start from the decomposition of the zero set $Z_{\!\bm{\nu}} = \bigsqcup \, G \cdot (M^\beta \cap \nu^{-1}(\beta))$. Then, one analyzes the quantities 
\[ j^G[\left.\sigma_{\!\nu}^\mathrm{tcl}\right|_{U_\beta}] \otimes_{G\ltimes \Cl_\Gamma(TU_\beta)}[\overline{\partial}^{\mathrm{cl}}_{TU_\beta,\Gamma}] = j^G[\!\nu|_{U_\beta}] \otimes [D|_{U_\beta}] \]
directly from the defomed Dirac operator by establishing the Ma--Tian--Zhang estimates in slices around the different components of $\Zv$. \\

The present article combined with \cite{LRS19} hopefully highlights that these approaches are similar up to a KK-theoretical Poincar\'{e} duality. To make this analogy more exact, it might be interesting (or not) to see whether working out the analytic approach within the K-homological side $j^G[\nu] \otimes [D]$ leads to simplifications parallel to the calculations made in previous subsection.

%%%%%%%%%%%%%%%%%%%%%%%%%%%%%%%%%%%%%%%%
%%%%%%%%%%%% References %%%%%%%%%%%%%%%

\providecommand{\bysame}{\leavevmode\hbox to3em{\hrulefill}\thinspace}
\providecommand{\MR}{\relax\ifhmode\unskip\space\fi MR }
% \MRhref is called by the amsart/book/proc definition of \MR.
\providecommand{\MRhref}[2]{%
  \href{http://www.ams.org/mathscinet-getitem?mr=#1}{#2}
}
\providecommand{\href}[2]{#2}

\end{document}